

\documentclass[12pt]{article}

\newtheorem{theorem}{Theorem}[section]
\usepackage[square,numbers]{natbib}
\usepackage{amsfonts}
\usepackage{amsmath}
\usepackage{amssymb}
\usepackage{fullpage}
\usepackage{stmaryrd} 
\usepackage{wasysym} 
\usepackage[colors]{optsys}

\multiopssep{setco}{}{\{}{:}{\}}

\usepackage{graphicx}
\usepackage{tikz}
\usepackage{caption}
\usepackage{hyperref}
\usepackage{subcaption}
\usepackage[T1]{fontenc}
\captionsetup[subfigure]{labelfont=rm}

\usepackage{tikz}
\usetikzlibrary{positioning}
\usepackage{pgfplots}
\pgfplotsset{compat=1.5.1}
\usepgfplotslibrary{dateplot}
\usepgfplotslibrary{fillbetween}
\usetikzlibrary{patterns}
\usepackage{color}
\colorlet{mygreen}{black!50!green!60}
\colorlet{myblue}{black!30!blue}

\usepackage{float} 
\usepackage{tikz}
\usepackage{tikz-cd}
\usetikzlibrary{patterns}
\usetikzlibrary{shapes.geometric}
\usetikzlibrary{positioning}
\usepackage{dirtytalk}


\newcommand{\RadialProjection}{\varrho}

\renewcommand{\SPHERE}{S}
\renewcommand{\BALL}{B}
\newcommand{\SPHEREzero}{\SPHERE^{(0)}}

\newcommand{\SubSet}{X}

\newcommand{\objective}{f}
\newcommand{\Objective}{\mathcal{F}}

\renewcommand{\TripleNorm}{\DoubleNorm}

\renewcommand{\DUAL}{\RR^{n}}
\renewcommand{\PRIMAL}{\RR^{n}}
\renewcommand{\Vset}{\ic{1,{n}}}

\renewcommand{\Capra}{Capra}




\graphicspath{{Figures/}}
\def\keywordsname{Keywords}
\def\mykeywords{\par\addvspace{17pt}\small\rmfamily
  \trivlist\if!\keywordsname!\item[]\else\item[\hskip\labelsep
    {\bfseries\keywordsname}]\fi}

\newenvironment{keywords}{\begin{mykeywords}}{\end{mykeywords}}

\title{What are \Capra-Convex Sets?}

\author{Jean-Philippe~Chancelier\thanks{CERMICS, CNRS, ENPC, Institut Polytechnique de Paris, Marne-la-Vall\'ee, France} 
  \and
  Michel~De~Lara\footnotemark[1]
  \and
  Adrien~Le~Franc\footnotemark[1]
  \and
  Seta Rakotomandimby\footnotemark[1]
}

\begin{document}
\maketitle


\begin{abstract}
  This paper focuses on a specific form of abstract convexity known as
  \Capra-convexity, where a constant along primal rays (Capra) coupling replaces
  the scalar product used in standard convex analysis to define generalized
  Fenchel conjugacies. A key motivating result is that the $\ell_0$
  pseudonorm --- which counts the number of nonzero components in a vector --- is equal
  to its \Capra-biconjugate. This implies that $\ell_0$ is a \Capra-convex
  function, highlighting potential applications in statistics and machine
  learning, particularly for enforcing sparsity in models. Building on prior
  work characterizing the \Capra-subdifferential of $\ell_0$ and the role of source
  norms in defining the \Capra-coupling, the paper provides
  a characterization of \Capra-convex sets.
\end{abstract}

\begin{keywords}
  Generalized subdifferential; $\lzero$ pseudonorm; Sparsity; \Capra-coupling
\end{keywords}
\section{Introduction}
\label{sec:introduction}%

From a historical perspective, convexity was first studied as a geometrical
property of sets~\cite{Fenchel:1983}.
The formal contemporary definition of a convex function was seemingly introduced
by Jensen~\cite{Jensen:1906}, in the context of a growing interest for such
functions
at the dawn of the XIX$^\text{th}$ century.
The development of modern convex analysis, with the prominent role of convex
conjugacies, is mostly due to Fenchel, Moreau and Rockafellar over the
XX$^\text{th}$ century, as exposed in the bibliographical notes
of~\cite{Hiriart-Urruty-Lemarechal:2004}.

Due to the success of convexity in solving optimization problems, there has been
several attempts to extend this theory to larger classes of sets, functions and
conjugacies.
For instance, starting from the geometrical roots of convexity, 
spherical convexity was introduced
for the study of sets contained in the Euclidean sphere:
the definition of a spherically-convex set follows the one of a classical convex set,
in the sense that the geodesic between two points of a spherical set
must remain inside the set~\cite{Ferreira-Iusem-Nemeth:2014}.
Conversely, other extensions of convexity originate from
abstract conjugacies,
which derive from the choice of a general set of functions
to play the role of affine minorants in the classical
definition of the Fenchel conjugate and biconjugate. 
Examples of abstract conjugacies are given as early as Moreau's reference lectures~\cite{Moreau:1966-1967}.
We refer to~\cite{Singer:1997} for a complete introduction to the topic.

In this paper, we focus on a specific type of abstract convexity, named \Capra-convexity,
where the so-called \Capra\ (constant along primal rays) coupling function
plays the role of the scalar product in usual convexity,
to generalize the set of affine minorants in usual Fenchel conjugacies.
A principal result of \Capra-convexity, which motivates our interest, 
is that the $\lzero$ pseudonorm --- which counts the nonzero components of a vector ---
is equal to its \Capra-biconjugate, and is therefore a \Capra-convex function~\cite{Chancelier-DeLara:2021_ECAPRA_JCA}.
This suggests potential applications to statistics and machine learning, 
where the $\lzero$ pseudonorm is extensively used to enforce sparsity
in statistical models.
From the perspective of convex analysis, further investigations
have allowed to better understand the role of a source norm
in the definition of the \Capra-coupling~\cite{Chancelier-DeLara:2022_OSM_JCA}
and to characterize the \Capra-subdifferential of $\lzero$~\cite{LeFranc-Chancelier-DeLara:2022}.
Moreover, algorithmic approaches based on a generalized
cutting plane method have been investigated recently in~\cite{Rakotomandimby:2025OL}.
In addition to these previous developments, and in the perspective
of minimizing $\lzero$ --- or any \Capra-convex function of interest --- 
over a constraint set which somehow \textit{preserves} \Capra-convexity, 
we now ask: 
\begin{center}
	what are \Capra-convex sets?
\end{center}

In order to address this question,
we first introduce background results on usual convexity 
and \Capra-convexity hereafter in~\S\ref{sec:introduction}.
Second, we propose a definition for \Capra-convex sets
in~\S\ref{sec:capra_convex_sets} and, as our main results,
we provide explicit characterizations of such sets.
Subsequently, 
we investigate on the relationship between
\Capra-convex sets and conical hulls, 
which allows a connection between closed spherically-convex and \Capra-convex sets.
Third, we propose a collection of examples in~\S\ref{sec:various_examples}
to illustrate our main results.
To ease the reading of the paper, 
most technical proofs are given in Appendix~\ref{sec:proofs}.

\subsection{Convex functions and sets}
\label{sec:background_sets}%

We introduce some basic notions 
and refer to~\cite{Rockafellar-Wets:1998, Bauschke-Combettes:2017}
for more advanced materials.

We denote by $\barRR = \RR \cup \na{+\infty, -\infty}$ the extended real line.
We consider the Euclidean space~$\PRIMAL$, equipped with the scalar product~$\proscal{\cdot}{\cdot}$.
For any function \( \fonctionuncertain \colon \PRIMAL \to \barRR \),
its \emph{epigraph} is the set \( \epigraph\fonctionuncertain= 
\defset{ \np{\uncertain,t}\in\PRIMAL\times\RR}%
{\fonctionuncertain\np{\uncertain} \leq t} \).
We recall that the function $\fonctionuncertain$ is convex
if and only if its epigraph is a convex set.
For any set $\SubSet \subseteq \PRIMAL$, 
$\Indicator{\SubSet} \colon \PRIMAL \to \barRR $ 
and 
$\sigma_\SubSet \colon \PRIMAL \to \barRR$ denote respectively
the \emph{indicator function} 
and the \emph{support function} of the
set~$\SubSet$, defined as
\begin{subequations}
  \begin{align}
    \Indicator{\SubSet}\np{\primal} &= 0 \text{ if } \primal \in \SubSet
                                      \eqsepv
                                      \Indicator{\SubSet}\np{\primal} = +\infty \text{ if } \primal \not\in \SubSet 
                                      \eqfinp 
                                      \label{eq:indicator_function}
    \\
    \sigma_{\SubSet}\np{\dual} &= \sup_{\primal \in \SubSet} \proscal{\primal}{\dual} \eqsepv \forall \dual \in \DUAL \eqfinp
    \label{eq:support_function}
  \end{align}
\end{subequations}
We denote by $\convexhull\np{\SubSet}$
the convex hull of the set $\SubSet$
(defined as the smallest convex subset of $\PRIMAL$ containing $\SubSet$),
and by $\closedconvexhull\np{\SubSet}$ the closed convex hull of $\SubSet$
(defined as the closure of $\convexhull\np{\SubSet}$).

We pay a special attention to cones, which are sets
$\Cone \subseteq \PRIMAL$ such that
\( \lambda\Cone\subset\Cone \) for all \( \lambda > 0 \).
Notice that, with this definition, a cone does not necessarily contain the origin,
and is not necessarily convex, as will be illustrated with \Capra-convex sets in~\S\ref{sec:capra_convex_sets}.
We say that a cone~$\Cone$ 
is \emph{pointed} if
\( \Cone \cap \np{-\Cone} \subseteq \na{0} \).
A cone can be constructed from
any subset $\SubSet \subseteq \PRIMAL$, by means of the \emph{conical hull}
of $\SubSet$, defined in~\cite[Definition 6.1]{Bauschke-Combettes:2017} as
\begin{subequations}
  \begin{align}
    \conicalhull(\SubSet)
    &=
      \defset{\lambda\primal}{\primal\in\SubSet\eqsepv \lambda> 0}
      \eqfinv
      \label{eq:conicalhull}
      \intertext{or by means of the \emph{positive hull}
      of $\SubSet$, defined in~\cite[Chapter 3, \S G]{Rockafellar-Wets:1998} as }
\positivehull(\SubSet)
   &=
     \defset{\lambda\primal}{\primal\in\SubSet\eqsepv \lambda \geq 0}
     =\conicalhull(\SubSet) \cup \na{0}
     \eqfinp
           \label{eq:positivehull}
  \end{align}
\end{subequations}
We stress that 
this definition of the conical hull omits the value
$\lambda = 0$, so that if $0 \notin \SubSet$
then $0 \notin \conicalhull(\SubSet)$.
By contrast, \( 0\in \positivehull(\SubSet) \). 
We also respectively denote by \(\closedcone\np{\SubSet}\) 
and \(\overline{\positivehull}\np{\SubSet}\) the topological closures of 
the conical hull and the positive hull of a set.

\subsection{Basic notions in \Capra-convexity}
\label{sec:backgounds_Capra}

We start by recalling the definition of the \Capra\ coupling.
In what follows, let $\DoubleNorm{\cdot}$ be 
a norm on~$\PRIMAL$, referred to as the \emph{source norm}.
We respectively denote 
\(\SPHERE= \defset{\primal \in \RR^d}{\norm{\primal} = 1} 
  \) 
  and 
  \(\BALL= \defset{\primal \in \RR^d}{\norm{\primal} \leq 1} 
  \) the unit sphere and unit ball associated with the source norm.

  \begin{definition}
    \label{de:normalization_mapping}
We define the set
\begin{equation}
  \SPHEREzero = \SPHERE \cup \na{0} \eqfinv
  \label{eq:sphere_zero}%
\end{equation}
and the \emph{radial projection}
\begin{equation}
  \RadialProjection \colon \PRIMAL \rightarrow \PRIMAL 
  \eqsepv
  \RadialProjection(\primal) =
  \frac{\normalsize \primal}{\normalsize \DoubleNorm{\primal}} \text{ if }  \primal \neq 0
  \eqsepv \RadialProjection(\primal) = 0  \text{ if }  \primal = 0 \eqfinp
  \label{eq:normalization_mapping}
\end{equation}    
\end{definition}
We will use the following easy-to-establish properties:
\begin{subequations}
	\begin{gather}
		\RadialProjection(\PRIMAL)=\SPHEREzero \eqfinv \label{eq:RadialProjection_PRIMAL} \\
		 \text{(zero-homogeneity)} \quad
		\RadialProjection(\lambda \primal) = \RadialProjection(\primal) \eqsepv
		\forall \lambda > 0 \eqsepv \primal \in \PRIMAL \eqfinv \label{eq:zero_homogeneity} \\
		\text{if } \Cone \subset\PRIMAL \text{ is a cone, then }
		\RadialProjection\np{\Cone} = \Cone \cap \SPHEREzero \eqfinv \label{eq:cone_inter_spherezero} \\
		\Converse{\RadialProjection}\np{\Primal}
		= \Converse{\RadialProjection}\np{\Primal \cap \SPHEREzero}
		=\conicalhull\np{\Primal \cap \SPHEREzero} \eqsepv \Primal \subset\PRIMAL \eqfinv \label{eq:RadialProjection_inverse} 
	\end{gather}
\end{subequations}

\begin{definition}[\cite{Chancelier-DeLara:2022_CAPRA_OPTIMIZATION}, Definition~4.1]
  \label{de:Capra}
  We define the
  \emph{\Capra~coupling}~$\CouplingCapra \colon \PRIMAL \times \DUAL \to \RR$ between
  $\PRIMAL$ and $\DUAL$, by
  \begin{equation}
    \forall \dual \in \DUAL \eqsepv 
    \CouplingCapra\np{\primal, \dual} 
    = \proscal{\RadialProjection(\primal)}{\dual} =
    \begin{cases}
      \frac{ \proscal{\primal}{\dual} }{ \DoubleNorm{\primal} }
      \eqsepv &\text{ if } \primal \neq 0 \eqfinv
      \\
      0 \eqsepv &\text{ if } \primal = 0 \eqfinp
    \end{cases}
    \label{eq:coupling_CAPRA}
  \end{equation}
\end{definition}

A coupling function such as the \Capra~coupling $\CouplingCapra$ given in Definition~\ref{de:Capra}
gives rise to generalized Fenchel-Moreau conjugacies~\cite{Singer:1997, Martinez-Legaz:2005},
that we briefly recall.
Let us consider a function $\fonctionprimal \colon \PRIMAL \to \barRR$.
The $\CouplingCapra$-Fenchel-Moreau conjugate of $\fonctionprimal$
is the function $\SFM{ \fonctionprimal }{\CouplingCapra} \colon \DUAL \to \barRR$
defined by
\begin{subequations}
  \label{eq:Capra_conjugacies}%
  \begin{equation}
    \SFM{ \fonctionprimal }{\CouplingCapra}(\dual)
    = 
    \sup_{\primal \in \PRIMAL} \Bp{ \CouplingCapra\np{\primal,\dual} 
      -\fonctionprimal\np{\primal}  } 
    \eqsepv \forall \dual \in \DUAL
    \eqfinv
    \label{eq:Fenchel-Moreau_conjugate}
  \end{equation}
  the $\CouplingCapra$-Fenchel-Moreau biconjugate of $\fonctionprimal$
  is the function $\SFMbi{ \fonctionprimal }{\CouplingCapra} \colon \PRIMAL \to \barRR$
  defined by
  \begin{equation}
    \SFMbi{ \fonctionprimal }{\CouplingCapra}(\primal)
    = 
    \sup_{\dual \in \DUAL} \Bp{ \CouplingCapra\np{\primal,\dual} 
      -\SFM{ \fonctionprimal }{\CouplingCapra}(\dual)  } 
    \eqsepv \forall \primal \in \PRIMAL
    \eqfinv
    \label{eq:Fenchel-Moreau_biconjugate}
  \end{equation}
  and we have the inequality
  \begin{equation}
    \SFMbi{ \fonctionprimal }{\CouplingCapra}(\primal)
    \leq \fonctionprimal(\primal)
    \eqsepv \forall \primal \in \PRIMAL
    \eqfinp
    \label{eq:biconjugate_inequality}
  \end{equation}
\end{subequations}

Observe that, if we replace the \Capra\ coupling $\CouplingCapra$ with the
scalar product $\proscal{\cdot}{\cdot}$ in \eqref{eq:Capra_conjugacies}, 
we retrieve the well-known notions of Fenchel conjugate $\SFM{ \fonctionprimal }{\star}$
and biconjugate $\SFMbi{ \fonctionprimal }{\star}$ in
standard convex analysis.  We refer to
\cite{Chancelier-DeLara:2022_CAPRA_OPTIMIZATION} for a more complete
introduction to \Capra\ conjugacies.

\begin{definition}
  \label{de:capra_convex_functions}
  We say that
  the function $\fonctionprimal \colon \PRIMAL \to \barRR$ is \Capra-convex
  \IFF we have an equality in~\eqref{eq:biconjugate_inequality}, that is,
  if $\SFMbi{ \fonctionprimal }{\CouplingCapra} = \fonctionprimal$.
\end{definition}

We define the \emph{support} of a vector $\primal=\sequence{\primal_j}{ j \in\Vset } \in \PRIMAL$ by
$\SupportMapping(\primal) = \bset{ j \in\Vset }{\primal_j \not= 0 }$,
 where $\Vset = \na{1, \ldots, n}$.
The \emph{\lzeropseudonorm} is the function
\( \lzero \colon \PRIMAL \to \Vset \)
defined~by 
\begin{equation}
  \lzero\np{\primal} =
  \bcardinal{ \SupportMapping(\primal) }
  \eqsepv \forall \primal \in \PRIMAL
  \eqfinv
  \label{eq:pseudo_norm_l0}  
\end{equation}
where $\cardinal{\IndexSubset}$ denotes the cardinality of 
a subset \( \IndexSubset \subseteq \Vset \).
We recall a central result which motivates our interest for \Capra-convexity: 
for a certain class of source norms, which encompasses e.g.
the $\ell_p$ norms for $1 < p < +\infty$,
the $\lzero$ pseudonorm is a \Capra-convex functions, in the sense of Definition~\ref{de:capra_convex_functions}.
We refer to~\cite{Chancelier-DeLara:2022_SVVA} for more details on the choice of a suitable source norm
to enforce the \Capra-convexity of $\lzero$.

\section{\Capra-convex sets}
\label{sec:capra_convex_sets}

In~\S\ref{sec:definition_and_main_results}, we define
\Capra-convex sets and present our main results regarding the characterization
of such sets.
Then, in \S\ref{sec:capra_convex_conical_hull}, 
we present sufficient conditions for the conical hull of a set to be \Capra-convex,
and we discuss of the links between \Capra-convex and spherically-convex sets.
To ease the reading of this section, the proofs of the main results 
are relegated to Appendix~\ref{sec:proofs}.

\subsection{Definition and characterization of \Capra-convex sets}
\label{sec:definition_and_main_results}

We start with the definition of \Capra-convex sets.
Subsequently, we outline our main results regarding the characterization of such sets.

\subsubsubsection{Definition of \Capra-convex sets using indicator functions}

Closed convex sets play a central role in
convex analysis: for instance,
proper functions with closed convex epigraphs
are stable by the Fenchel-Moreau biconjugate.
In particular, for a closed convex set $\SubSet \subseteq \PRIMAL$,
its indicator function $\Indicator\SubSet$ is convex \lsc, so that \(
   \Indicator\SubSet = \SFMbi{ \Indicator\SubSet }{\star}\), 
even for the --- unique --- nonproper case where 
$\SubSet = \emptyset$, as $\Indicator\emptyset = \SFMbi{ \Indicator\emptyset }{\star} = +\infty$.
By analogy with closed convex sets in classical convex analysis, we provide the following definition
of \Capra-convex sets.

\begin{definition}
  \label{de:capra_convex_sets}
  Let $\DoubleNorm{\cdot}$ be a source norm, and
  $\CouplingCapra$ be the corresponding \Capra-coupling
  as in Definition~\ref{de:Capra}.
  We say that the set $\SubSet \subseteq \PRIMAL$
  is \Capra-convex if the indicator function $\Indicator\SubSet$
  is a \Capra-convex function.
\end{definition}

With this definition, we deduce the following immediate property.

\begin{proposition}
  \label{pr:cone}%
  If a set $\SubSet \subseteq \PRIMAL$ is \Capra-convex, then $\SubSet$ is a cone.
\end{proposition}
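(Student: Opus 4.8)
The plan is to exploit the one structural feature that distinguishes the \Capra\ coupling from an arbitrary bilinear pairing: by Definition~\ref{de:Capra}, $\CouplingCapra\np{\primal,\dual} = \proscal{\RadialProjection\np{\primal}}{\dual}$ factors through the radial projection $\RadialProjection$, which is invariant under positive scaling of its argument by \eqref{eq:zero_homogeneity}. The key idea is that this invariance is automatically inherited by the \Capra-biconjugate of \emph{any} function, since in the biconjugate the primal variable enters only through the coupling. Applying this to $\Indicator\SubSet$ and then invoking the \Capra-convexity hypothesis, I expect to force $\Indicator\SubSet$ itself to be scaling-invariant, and the cone property will fall out directly from the definition of the indicator function.

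Concretely, I would first fix $\lambda > 0$ and $\primal \in \PRIMAL$ and record that $\RadialProjection\np{\lambda\primal} = \RadialProjection\np{\primal}$ by \eqref{eq:zero_homogeneity}, hence $\CouplingCapra\np{\lambda\primal,\dual} = \CouplingCapra\np{\primal,\dual}$ for every $\dual \in \DUAL$. Substituting this into the definition \eqref{eq:Fenchel-Moreau_biconjugate} of the biconjugate, the expression whose supremum over $\dual$ defines $\SFMbi{\Indicator\SubSet}{\CouplingCapra}\np{\lambda\primal}$ agrees, for each $\dual$, with the one defining $\SFMbi{\Indicator\SubSet}{\CouplingCapra}\np{\primal}$, so the two suprema coincide and $\SFMbi{\Indicator\SubSet}{\CouplingCapra}\np{\lambda\primal} = \SFMbi{\Indicator\SubSet}{\CouplingCapra}\np{\primal}$. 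Next, since $\SubSet$ is \Capra-convex, Definition~\ref{de:capra_convex_functions} gives $\Indicator\SubSet = \SFMbi{\Indicator\SubSet}{\CouplingCapra}$, so this scaling-invariance transfers to the indicator: $\Indicator\SubSet\np{\lambda\primal} = \Indicator\SubSet\np{\primal}$ for all $\lambda > 0$ and $\primal \in \PRIMAL$. Finally, reading this through \eqref{eq:indicator_function}, any $\primal \in \SubSet$ satisfies $\Indicator\SubSet\np{\primal} = 0$, whence $\Indicator\SubSet\np{\lambda\primal} = 0$, i.e. $\lambda\primal \in \SubSet$; this is precisely $\lambda\SubSet \subseteq \SubSet$ for every $\lambda > 0$, the definition of a cone.

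I do not anticipate a genuine obstacle here, as the argument is essentially immediate once one notices the invariance of the coupling. The only points deserving care are that the scaling invariance of $\RadialProjection$ must be used for $\lambda > 0$ only, which is exactly what the cone definition requires, and that the nonproper case is handled without extra work: the empty set is vacuously a cone, in line with the remark that $\Indicator\emptyset = \SFMbi{\Indicator\emptyset}{\CouplingCapra} = +\infty$ recorded before Definition~\ref{de:capra_convex_sets}.
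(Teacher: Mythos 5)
Your proof is correct and follows essentially the same route as the paper: both arguments rest on the fact that the \Capra-biconjugate of $\Indicator\SubSet$ is invariant under positive scaling of its argument because the coupling factors through the zero-homogeneous radial projection, and then transfer this invariance to $\Indicator\SubSet$ via the \Capra-convexity hypothesis. The only difference is presentational: the paper invokes the factorization $\SFMbi{\Indicator\SubSet}{\CouplingCapra} = \SFM{\Indicator\SubSet}{\CouplingCapra\star'} \circ \RadialProjection$ from an external reference, whereas you verify the scale-invariance directly from the definition~\eqref{eq:Fenchel-Moreau_biconjugate}, which is if anything more self-contained.
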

\begin{proof}
	The coupling $\CouplingCapra$ in Definition~\ref{de:Capra}
	is one-sided linear (see~\cite[Definition~2.3]{Chancelier-DeLara:2021_ECAPRA_JCA}) and factorizes 
	with the radial projection $\RadialProjection \colon \PRIMAL \to \SPHEREzero$ in~\eqref{eq:normalization_mapping},
	so that
	$\Indicator\SubSet = \SFM{ \Indicator\SubSet }{\CouplingCapra\CouplingCapra'} =
	\SFM{ \Indicator\SubSet }{\CouplingCapra\star'} \circ \RadialProjection$,
	 see~\cite[Proposition 2.5]{Chancelier-DeLara:2021_ECAPRA_JCA}.
	 Thus, for any $\lambda > 0$, one has that 
         \[
           \Indicator{\lambda\SubSet}(\primal) =
\Indicator\SubSet(\primal/\lambda) = 
\SFM{ \Indicator\SubSet }{\CouplingCapra\star'}\bp{\RadialProjection(\primal/\lambda)} =
           \SFM{ \Indicator\SubSet }{\CouplingCapra\star'}\bp{\RadialProjection(\primal)} =
\Indicator\SubSet(\primal) 
\eqsepv \forall \primal\in\PRIMAL
\eqfinv
\]
where we have used the property that the radial projection~$\RadialProjection$ is zero-homogeneous.
We conclude that \( \lambda\SubSet=\SubSet \) for any $\lambda > 0$, 
which proves that $\SubSet$ is a cone.
\end{proof}

\subsubsubsection{Main results on the characterization of \Capra-convex sets}

We will see that a \Capra-convex set needs not be convex, and that, somehow more
surprisingly, it needs not be closed either.
Our principal result is the following generic characterization of \Capra-convex sets.

\begin{theorem}
  \label{th:set_inter_spherezero}%
  Let $\DoubleNorm{\cdot}$ be a source norm on $\PRIMAL$, 
  $\RadialProjection \colon \PRIMAL \to \SPHEREzero$ be the radial projection defined
  in~\eqref{eq:normalization_mapping}, and 
  $\CouplingCapra$ be the corresponding \Capra-coupling,
  as in Definition~\ref{de:Capra}.
  Let $\Cone \subseteq \PRIMAL$ be a set. 	We have that
  \begin{equation}
    \Cone \text{ is \Capra-convex }
    \iff
    \Cone \text{ is a cone and } 
    \RadialProjection\np{\Cone}
      = \closedconvexhull\bp{\RadialProjection\np{\Cone}} \cap \SPHEREzero
      \eqfinp
    \label{eq:set_inter_spherezero}%
  \end{equation}
\end{theorem}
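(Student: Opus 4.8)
The plan is to reduce Capra-convexity of $\Cone$, via Definition~\ref{de:capra_convex_sets}, to an explicit computation of the Capra-biconjugate of the indicator $\Indicator\Cone$, and then to unwind the resulting fixed-point equation into the claimed geometric characterization. First I would exploit that the Capra-coupling factorizes as $\CouplingCapra\np{\primal,\dual} = \proscal{\RadialProjection\np{\primal}}{\dual}$, so that the conjugate of the indicator reads
\[
\SFM{\Indicator\Cone}{\CouplingCapra}\np{\dual} = \sup_{\primal\in\Cone}\proscal{\RadialProjection\np{\primal}}{\dual} = \sigma_{\RadialProjection\np{\Cone}}\np{\dual} = \SFM{\Indicator{\RadialProjection\np{\Cone}}}{\star}\np{\dual} \eqfinv
\]
that is, the ordinary support function of the set $\RadialProjection\np{\Cone}\subseteq\SPHEREzero$.

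Iterating, and again using the factorization through $\RadialProjection$, the biconjugate becomes
\[
\SFMbi{\Indicator\Cone}{\CouplingCapra}\np{\primal} = \sup_{\dual\in\DUAL}\Bp{\proscal{\RadialProjection\np{\primal}}{\dual} - \sigma_{\RadialProjection\np{\Cone}}\np{\dual}} = \SFMbi{\Indicator{\RadialProjection\np{\Cone}}}{\star}\bp{\RadialProjection\np{\primal}} \eqfinp
\]
The key analytic input here is the classical fact that the standard Fenchel biconjugate of an indicator function is the indicator of the closed convex hull, i.e. $\SFMbi{\Indicator{\RadialProjection\np{\Cone}}}{\star} = \Indicator{\closedconvexhull\np{\RadialProjection\np{\Cone}}}$. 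Combining the two displays yields $\SFMbi{\Indicator\Cone}{\CouplingCapra} = \Indicator{\Converse{\RadialProjection}\bp{\closedconvexhull\np{\RadialProjection\np{\Cone}}}}$, so that by Definition~\ref{de:capra_convex_sets} the set $\Cone$ is Capra-convex if and only if $\Cone = \Converse{\RadialProjection}\bp{\closedconvexhull\np{\RadialProjection\np{\Cone}}}$. Rewriting the preimage with property~\eqref{eq:RadialProjection_inverse}, this condition is exactly $\Cone = \conicalhull\np{T}$ with $T = \closedconvexhull\np{\RadialProjection\np{\Cone}}\cap\SPHEREzero$.

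It then remains to establish, purely set-theoretically, that $\Cone = \conicalhull\np{T}$ is equivalent to the conjunction "$\Cone$ is a cone and $\RadialProjection\np{\Cone} = T$", which is the right-hand side of~\eqref{eq:set_inter_spherezero}. For this I would isolate two auxiliary facts, both obtained by elementary manipulation from the zero-homogeneity~\eqref{eq:zero_homogeneity} together with~\eqref{eq:cone_inter_spherezero}--\eqref{eq:RadialProjection_inverse}: (i) for any $T\subseteq\SPHEREzero$ one has $\conicalhull\np{T}\cap\SPHEREzero = T$; and (ii) for any cone $\Cone$ one has $\conicalhull\bp{\RadialProjection\np{\Cone}} = \Cone$. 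The forward implication follows because $\Cone = \conicalhull\np{T}$ is then a cone, whence $\RadialProjection\np{\Cone} = \Cone\cap\SPHEREzero = \conicalhull\np{T}\cap\SPHEREzero = T$ using~\eqref{eq:cone_inter_spherezero} and~(i); the reverse implication follows because, if $\Cone$ is a cone with $\RadialProjection\np{\Cone} = T$, then~(ii) gives $\Cone = \conicalhull\bp{\RadialProjection\np{\Cone}} = \conicalhull\np{T}$.

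I expect the main obstacle to be the bookkeeping around the origin rather than any deep estimate: since the conical hull in~\eqref{eq:conicalhull} excludes the scalar $\lambda = 0$, and a cone need not contain $0$, the identities~(i) and~(ii) must be verified separately on the sphere part and at the point $0$, and one must track the equivalences $0\in\Cone \iff 0\in\RadialProjection\np{\Cone} \iff 0\in T$ so that no spurious origin is created or dropped when passing between $\Cone$, $\RadialProjection\np{\Cone}$, their closed convex hull, and the associated conical hull. A secondary point to verify is that the computation and the final equivalence remain valid in the degenerate case $\Cone = \emptyset$, for which all sets involved reduce to $\emptyset$ and both sides of~\eqref{eq:set_inter_spherezero} hold.
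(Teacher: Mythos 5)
Your proposal is correct and follows essentially the same route as the paper: your direct computation of $\SFMbi{\Indicator\Cone}{\CouplingCapra}$ as $\Indicator{\closedconvexhull\np{\RadialProjection\np{\Cone}}}\circ\RadialProjection$ is exactly the content of the paper's Lemma~\ref{le:capra_convex_set} (obtained there by citing the one-sided-linearity of $\CouplingCapra$ and the classical biconjugate-of-indicator fact), and your set-theoretic unwinding via the identities $\conicalhull\np{T}\cap\SPHEREzero=T$ and $\conicalhull\bp{\RadialProjection\np{\Cone}}=\Cone$ matches the paper's use of~\eqref{eq:cone_inter_spherezero} and~\eqref{eq:RadialProjection_inverse}. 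Your attention to the origin bookkeeping and the case $\Cone=\emptyset$ is appropriate and consistent with the paper's conventions.
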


To illustrate Theorem~\ref{th:set_inter_spherezero},
we provide an example of \Capra-convex set in Figure~\ref{fig:illustration_theorem_set_inter_spherezero}
which satisfies the characterization~\eqref{eq:set_inter_spherezero}.
Moreover,
to give a more geometrical interpretation of~\eqref{eq:set_inter_spherezero},
we recall~\eqref{eq:cone_inter_spherezero}:
if the set $\Cone$ is a cone, 
then $\RadialProjection\np{\Cone} = \Cone\cap\SPHEREzero$.
It is also insightful to observe that
the equality between sets in~\eqref{eq:set_inter_spherezero}
is equivalent to 
\(\Cone = \Converse{\RadialProjection}
\Bp{\closedconvexhull\bp{\RadialProjection\np{\Cone}}}\),
so that the set \(\Converse{\RadialProjection} 
\Bp{\closedconvexhull\bp{\RadialProjection\np{\Cone}}}\)
can be interpreted as the Capra-convex hull of $\Cone$.
Nevertheless, the characterization of~\eqref{eq:set_inter_spherezero}
remains quite formal, and we now
provide more practical conditions 
to identify \Capra-convex sets.

\begin{corollary}
  \label{co:implication}%
  Under the hypotheses of Theorem~\ref{th:set_inter_spherezero},
  we have that
  \begin{equation}
    \Cone \text{ is \Capra-convex }
    \implies
    \begin{cases}
      \Cone \text{ is a cone,} \\
      \Cone \cup \na{0} \text{ is closed} \eqfinv \\
      \Cone \cap \na{0} =
      \closedconvexhull\bp{\RadialProjection\np{\Cone}} \cap \na{0} \eqfinp 
    \end{cases}
    \label{eq:capra_convex_sets}%
  \end{equation}
\end{corollary}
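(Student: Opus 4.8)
The plan is to derive each of the three conditions in~\eqref{eq:capra_convex_sets} from the characterization~\eqref{eq:set_inter_spherezero} established in Theorem~\ref{th:set_inter_spherezero}. So I assume throughout that $\Cone$ is \Capra-convex, which by the theorem means that $\Cone$ is a cone and that $\RadialProjection\np{\Cone} = \closedconvexhull\bp{\RadialProjection\np{\Cone}} \cap \SPHEREzero$. The first condition, that $\Cone$ is a cone, is then immediate (and is already recorded in Proposition~\ref{pr:cone}), so the work lies in the second and third conditions.

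For the third condition, I would intersect both sides of the set equality $\RadialProjection\np{\Cone} = \closedconvexhull\bp{\RadialProjection\np{\Cone}} \cap \SPHEREzero$ with the singleton $\na{0}$. On the left, since $\Cone$ is a cone, property~\eqref{eq:cone_inter_spherezero} gives $\RadialProjection\np{\Cone} = \Cone \cap \SPHEREzero$, and intersecting with $\na{0}$ yields $\Cone \cap \na{0}$ because $0 \in \SPHEREzero$ by the definition~\eqref{eq:sphere_zero}. On the right, $\bp{\closedconvexhull\bp{\RadialProjection\np{\Cone}} \cap \SPHEREzero} \cap \na{0} = \closedconvexhull\bp{\RadialProjection\np{\Cone}} \cap \na{0}$, again since $0 \in \SPHEREzero$. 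This gives exactly the third line of~\eqref{eq:capra_convex_sets}.

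The main obstacle is the second condition, that $\Cone \cup \na{0}$ is closed; this is where I expect the real argument to sit. The idea is that $\RadialProjection\np{\Cone}$ equals the intersection of a closed set (the closed convex hull) with $\SPHEREzero$, which controls the behaviour of $\Cone$ near the sphere and at the origin. I would aim to show $\Cone \cup \na{0} = \Converse{\RadialProjection}\bp{\RadialProjection\np{\Cone}} \cup \na{0}$ using that $\Cone$ is a cone, and then use the description~\eqref{eq:RadialProjection_inverse}, namely $\Converse{\RadialProjection}\np{\Primal} = \conicalhull\np{\Primal \cap \SPHEREzero}$, to rewrite $\Cone \cup \na{0}$ as a positive hull of the compact-sphere-part of $\RadialProjection\np{\Cone}$. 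The key point is that $\RadialProjection\np{\Cone} \cap \SPHERE$ is closed: it equals $\closedconvexhull\bp{\RadialProjection\np{\Cone}} \cap \SPHERE$, an intersection of two closed sets, hence a closed and bounded, thus compact, subset of $\SPHERE$.

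Finally, I would invoke the standard fact that the positive hull $\positivehull(K)$ of a compact set $K \subseteq \SPHERE$ not containing the origin is closed, so that $\Cone \cup \na{0} = \positivehull\bp{\RadialProjection\np{\Cone} \cap \SPHERE}$ is closed. The one subtlety to handle carefully is the case distinction according to whether $0 \in \Cone$ and whether $\RadialProjection\np{\Cone} \cap \SPHERE$ is empty; in the degenerate cases $\Cone \subseteq \na{0}$ the claim is trivial, and otherwise the compactness-of-the-spherical-part argument applies. I would present these verifications compactly, since each reduces to elementary set manipulations once the compactness of $\RadialProjection\np{\Cone} \cap \SPHERE$ and the closedness of the positive hull of a compact spherical set are in hand.
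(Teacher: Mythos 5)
Your proposal is correct and follows essentially the same route as the paper: the closedness of $\Cone \cup \na{0}$ is obtained by writing it as the positive hull of $\closedconvexhull\bp{\RadialProjection\np{\Cone}} \cap \SPHERE$, a compact set not containing the origin, and invoking the closedness of such conical hulls (Lemma~\ref{le:closeness} in the paper). Your treatment of the third condition by intersecting the set identity of Theorem~\ref{th:set_inter_spherezero} with $\na{0}$ is only a cosmetic variant of the paper's indicator-function argument, both resting on $\RadialProjection(0)=0\in\SPHEREzero$.
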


Referring again to Figure~\ref{fig:illustration_theorem_set_inter_spherezero},
we observe that, for the example of set $\Cone$ in Figure~\ref{fig:K3_intersect_l_infty}, 
we have indeed that 
$\Cone \cap \na{0} =
\closedconvexhull\bp{\RadialProjection\np{\Cone}} \cap \na{0} = \emptyset$.
Thus, in practice, checking whether~\eqref{eq:capra_convex_sets} holds
often boils down to checking whether $0 \in \closedconvexhull\bp{\RadialProjection\np{\Cone}}$ holds,
which can be a convenient alternative to~\eqref{eq:set_inter_spherezero} 
--- we refer to the examples of~\S\ref{subsec:lp_examples}.

One may naturally wonder whether the reverse implication
holds in~\eqref{eq:capra_convex_sets}. Interestingly, the answer to that question
depends on the geometry of the unit ball induced by the source norm.
Indeed, we will show a sufficient condition to have the equivalence in~\eqref{eq:capra_convex_sets}
which depends on the following property of balls.
We recall that the unit ball~$\BALL$ 
of a norm~$\DoubleNorm{\cdot}$ is rotund when
the corresponding sphere~$\SPHERE$ coincides with
the extreme points of~$\BALL$.
For instance, this is the case of the~$\ell_p$ norms for values of~$p$
satisfying $1 < p < \infty$.

\begin{corollary}[rotund norm balls]
  \label{co:rotund_balls}%
  Under the hypotheses of Theorem~\ref{th:set_inter_spherezero}, suppose
  moreover that the unit ball of the source norm $\DoubleNorm{\cdot}$ is rotund.
  Then, the set~\(\Cone\) is \Capra-convex if and only if
  the three conditions in the right-hand side of~\eqref{eq:capra_convex_sets} are satisfied.
\end{corollary}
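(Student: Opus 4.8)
The plan is to establish only the reverse implication in~\eqref{eq:capra_convex_sets}, since the forward implication holds unconditionally by Corollary~\ref{co:implication}. So I would assume that $\Cone$ is a cone, that $\Cone \cup \na{0}$ is closed, and that $\Cone \cap \na{0} = \closedconvexhull\bp{\RadialProjection\np{\Cone}} \cap \na{0}$, and I would aim to verify the characterization~\eqref{eq:set_inter_spherezero} of Theorem~\ref{th:set_inter_spherezero}. Writing $A = \RadialProjection\np{\Cone}$ for brevity, the inclusion $A \subseteq \closedconvexhull\np{A} \cap \SPHEREzero$ is immediate from $A \subseteq \SPHEREzero$ (by~\eqref{eq:RadialProjection_PRIMAL}) together with $A \subseteq \closedconvexhull\np{A}$; the whole difficulty therefore lies in the reverse inclusion $\closedconvexhull\np{A} \cap \SPHEREzero \subseteq A$, which I would treat by splitting $\SPHEREzero = \SPHERE \cup \na{0}$.

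The contribution of the origin is handled directly. Since $\RadialProjection\np{0} = 0$ while $\RadialProjection$ sends every nonzero vector into $\SPHERE$, one has $A \cap \na{0} = \Cone \cap \na{0}$, and the third assumption then gives $\closedconvexhull\np{A} \cap \na{0} = \Cone \cap \na{0} = A \cap \na{0} \subseteq A$. It remains to prove the spherical inclusion $\closedconvexhull\np{A} \cap \SPHERE \subseteq A$, and this is the step where rotundity is indispensable.

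For the spherical part, I would first observe that $A \subseteq \SPHEREzero \subseteq \BALL$, so that $\closedconvexhull\np{A}$ is a closed convex subset of the compact ball $\BALL$, hence itself compact. Fix $y \in \closedconvexhull\np{A} \cap \SPHERE$. As $\DoubleNorm{y} = 1$, rotundity makes $y$ an extreme point of $\BALL$; and since any writing $y = \frac12\np{a+b}$ with $a,b \in \closedconvexhull\np{A} \subseteq \BALL$ would force $a = b = y$ by extremality in $\BALL$, the point $y$ is also an extreme point of the smaller convex set $\closedconvexhull\np{A}$. By Milman's theorem (the partial converse to Krein--Milman, applicable because $\closedconvexhull\np{A}$ is compact), every extreme point of $\closedconvexhull\np{A}$ lies in $\closure{A}$, so that $y \in \closure{A}$.

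Finally, I would upgrade $y \in \closure{A}$ to $y \in A$ using the closedness assumption. Because $\Cone$ is a cone, so is $\Cone \cup \na{0}$, and~\eqref{eq:cone_inter_spherezero} yields $A \cup \na{0} = \RadialProjection\np{\Cone \cup \na{0}} = \np{\Cone \cup \na{0}} \cap \SPHEREzero$; as an intersection of two closed sets, $A \cup \na{0}$ is closed, whence $\closure{A} \subseteq A \cup \na{0}$. Since $y \in \SPHERE$ satisfies $y \neq 0$, I conclude $y \in A$. Combining the two parts gives $\closedconvexhull\np{A} \cap \SPHEREzero \subseteq A$, hence equality, and Theorem~\ref{th:set_inter_spherezero} then certifies that $\Cone$ is \Capra-convex. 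The main obstacle is precisely the spherical inclusion: rotundity is what guarantees that a point of $\closedconvexhull\np{A}$ landing back on the sphere is extreme, so that Milman's theorem can recover it from $A$, and it is easy to see that this argument breaks down for non-rotund balls where faces of positive dimension meet $\SPHERE$.
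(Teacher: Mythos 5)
Your proof is correct, but it takes a genuinely different route from the paper's on the one step where rotundity matters. The paper also reduces to the reverse implication, splits $\closedconvexhull\bp{\RadialProjection\np{\Cone}}\cap\SPHEREzero$ into the origin (handled exactly as you do, via the third condition) and the spherical part; but for the spherical part it first uses compactness of $\Cone\cap\SPHEREzero$ (from closedness of $\Cone\cup\na{0}$) to replace the closed convex hull by the convex hull, then applies Carath\'eodory's theorem to write the point as a finite convex combination of points of $\Cone\cap\SPHEREzero$, rules out any weight on the origin by a triangle-inequality argument ($1=\DoubleNorm{\primal}\leq\sum\alpha_i\DoubleNorm{\primal_i}<1$), and finally invokes an external result (\cite[Corollary 16]{Chancelier-DeLara:2022_OSM_JCA}) asserting that for rotund balls $\convexhull\np{\Cone\cap\SPHERE}\cap\SPHERE=\Cone\cap\SPHERE$. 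You instead use rotundity at the definitional level --- a point of the sphere is an extreme point of $\BALL$, hence of the smaller compact convex set $\closedconvexhull\bp{\RadialProjection\np{\Cone}}$ --- and then Milman's partial converse to Krein--Milman to land in the closure of $\RadialProjection\np{\Cone}$, finishing with the closedness of $\RadialProjection\np{\Cone}\cup\na{0}$. Your version is self-contained modulo Milman's theorem and makes transparent exactly where rotundity enters; the paper's version avoids Milman in favor of Carath\'eodory but outsources the rotundity step to a cited corollary from a companion paper. Both arguments use the closedness hypothesis and the condition at the origin in essentially the same way, so the two proofs are of comparable strength and either would be acceptable.
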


When the unit ball of the source norm $\DoubleNorm{\cdot}$ is not rotund,
the reverse implication in~\eqref{eq:capra_convex_sets} might fail,
as we illustrate with an example in~\S\ref{subsec:lp_examples}.
Lastly,
in addition to Corollary~\ref{co:rotund_balls},
and regardless of the source norm $\DoubleNorm{\cdot}$, 
we identify the following notable cases
of \Capra-convex sets.

\begin{corollary}[closed convex cones]
  \label{co:closed_convex_cones}%
  Let $\Cone \subseteq \PRIMAL$ be a closed convex cone.
  We have that
  \begin{itemize}
  	\item[$(i)$] the set~\(\Cone\) is \Capra-convex,
  	\item[$(ii)$] if, moreover, the cone~$\Cone$ is pointed, then the set $\Cone\setminus\na{0}$
  	is \Capra-convex. 
  \end{itemize}
\end{corollary}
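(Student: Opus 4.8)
The plan is to prove both items directly from the main characterization in Theorem~\ref{th:set_inter_spherezero}, using the fact that a closed convex cone~$\Cone$ automatically behaves well under the radial projection~$\RadialProjection$. For item~$(i)$, since~$\Cone$ is given to be a cone, the remaining task is to verify the set equality $\RadialProjection\np{\Cone} = \closedconvexhull\bp{\RadialProjection\np{\Cone}} \cap \SPHEREzero$. By~\eqref{eq:cone_inter_spherezero}, the cone property gives $\RadialProjection\np{\Cone} = \Cone \cap \SPHEREzero$. The inclusion $\RadialProjection\np{\Cone} \subseteq \closedconvexhull\bp{\RadialProjection\np{\Cone}} \cap \SPHEREzero$ is immediate, since any set is contained in its closed convex hull and $\RadialProjection\np{\Cone} \subseteq \SPHEREzero$ by~\eqref{eq:RadialProjection_PRIMAL}. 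For the reverse inclusion, I would take a point $\primal \in \closedconvexhull\bp{\Cone \cap \SPHEREzero} \cap \SPHEREzero$ and argue that $\primal \in \Cone$.

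First I would observe that $\Cone \cap \SPHEREzero \subseteq \Cone$, and that since~$\Cone$ is a \emph{closed convex} cone it is in particular closed and convex, hence $\closedconvexhull\bp{\Cone \cap \SPHEREzero} \subseteq \closedconvexhull\np{\Cone} = \Cone$. Therefore $\closedconvexhull\bp{\Cone \cap \SPHEREzero} \cap \SPHEREzero \subseteq \Cone \cap \SPHEREzero = \RadialProjection\np{\Cone}$, which establishes the reverse inclusion and completes item~$(i)$. Thus the whole argument for~$(i)$ reduces to the sandwiching $\RadialProjection\np{\Cone} \subseteq \closedconvexhull\bp{\RadialProjection\np{\Cone}} \cap \SPHEREzero \subseteq \Cone \cap \SPHEREzero = \RadialProjection\np{\Cone}$, where the outer equality is~\eqref{eq:cone_inter_spherezero}.

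For item~$(ii)$, I would set $\Cone' = \Cone \setminus \na{0}$ and again verify~\eqref{eq:set_inter_spherezero}. Since removing the origin from a cone leaves a cone, $\Cone'$ is a cone, and by zero-homogeneity~\eqref{eq:zero_homogeneity} together with $\RadialProjection(0)=0$ one checks that $\RadialProjection\np{\Cone'} = \RadialProjection\np{\Cone} \setminus \na{0} = \Cone \cap \SPHERE$ (the sphere without the origin), using that $\Cone$ is a cone so $\RadialProjection\np{\Cone} = \Cone\cap\SPHEREzero$. The key point is then that $0 \notin \closedconvexhull\bp{\RadialProjection\np{\Cone'}}$: this is where \emph{pointedness} enters. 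If~$0$ belonged to $\closedconvexhull\bp{\Cone \cap \SPHERE}$, one could write it (approximately) as a convex combination of unit vectors from the pointed cone~$\Cone$, which would force a nontrivial balanced combination summing to zero and thereby produce opposite directions $\primal, -\primal \in \Cone$ with $\primal \neq 0$, contradicting $\Cone \cap \np{-\Cone} \subseteq \na{0}$. Granting $0 \notin \closedconvexhull\bp{\RadialProjection\np{\Cone'}}$, the set equality for~$\Cone'$ follows from the one already proven for~$\Cone$ by intersecting out the origin, since $\closedconvexhull\bp{\RadialProjection\np{\Cone'}} \cap \SPHEREzero = \closedconvexhull\bp{\RadialProjection\np{\Cone}} \cap \SPHERE = \np{\Cone \cap \SPHEREzero}\setminus\na{0} = \RadialProjection\np{\Cone'}$.

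The main obstacle is the separation argument for pointedness, namely rigorously showing $0 \notin \closedconvexhull\bp{\Cone \cap \SPHERE}$ when~$\Cone$ is pointed. The subtlety is the closure: a finite convex combination of unit vectors cannot vanish without opposite directions appearing, but I must rule out~$0$ arising only as a \emph{limit} of such combinations. I would handle this by invoking that a pointed closed convex cone admits a strictly separating hyperplane, i.e.\ there exists $\dual \in \DUAL$ with $\proscal{\primal}{\dual} > 0$ for all $\primal \in \Cone \setminus \na{0}$; combined with compactness of the sphere~$\SPHERE$, this yields a uniform lower bound $\proscal{\primal}{\dual} \geq \delta > 0$ on $\Cone \cap \SPHERE$, a bound that passes to closed convex combinations and hence excludes~$0$ from $\closedconvexhull\bp{\Cone \cap \SPHERE}$. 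Securing the existence of such a strictly separating functional for a pointed closed convex cone is the crux of the whole argument.
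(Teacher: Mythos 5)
Your proposal is correct and follows the same overall strategy as the paper --- verifying the characterization of Theorem~\ref{th:set_inter_spherezero} --- but it diverges in the details of both items, in ways worth noting. For item~$(i)$, your sandwich $\RadialProjection\np{\Cone} \subseteq \closedconvexhull\bp{\RadialProjection\np{\Cone}} \cap \SPHEREzero \subseteq \Cone \cap \SPHEREzero$, resting on the single observation $\closedconvexhull\bp{\Cone\cap\SPHEREzero} \subseteq \closedconvexhull\np{\Cone} = \Cone$, is leaner than the paper's argument, which establishes the exact identity $\closedconvexhull\np{\Cone\cap\SPHEREzero} = \Cone\cap\BALL$ (the nontrivial inclusion being the decomposition $\primal = \DoubleNorm{\primal}\,\RadialProjection\np{\primal} + \bp{1-\DoubleNorm{\primal}}\,0$); your one-sided inclusion is all that is needed, so this is a genuine simplification. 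For item~$(ii)$, the two proofs part ways on how to exclude $0$ from $\closedconvexhull\bp{\Cone\cap\SPHERE}$: you invoke the existence of a strictly positive functional $\dual$ on $\Cone\setminus\na{0}$ (equivalently, nonemptiness of the interior of the dual cone of a pointed closed convex cone) and push the uniform bound $\proscal{\primal}{\dual}\geq\delta>0$ through closed convex combinations, whereas the paper first uses compactness of $\Cone\cap\SPHERE$ and Carath\'eodory's theorem to conclude that $\closedconvexhull\bp{\Cone\cap\SPHERE} = \convexhull\bp{\Cone\cap\SPHERE}$, reducing the problem to a \emph{finite} convex combination $0=\sum_i\alpha_i\primal_i$, from which pointedness yields an elementary contradiction ($-\primal_j\in\Cone$ forces $\primal_j=0\notin\SPHERE$). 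Both routes are valid in finite dimension, but you rightly identify the strictly separating functional as the crux you have not proved; since the standard proof of that very fact typically runs through the same compactness-of-the-convex-hull argument the paper uses, the paper's route is the more self-contained one, and you would need either to cite the dual-cone interiority result precisely or to adopt the Carath\'eodory reduction to close that remaining step.
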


\begin{figure}
	\begin{subfigure}[b]{0.5\linewidth}
		\centering
		\begin{tikzpicture}[scale=0.8]
		\filldraw[blue!30] (0,0) -- (2.5*1/2, {2.5*sqrt(3)/2}) -- (2.5*1/2, {-2.5*sqrt(3)/2}) -- cycle;
		\draw[gray, dashed] (-1,-1)--(-1,1) ;
		\draw[gray, dashed] (-1,1)--(1,1) ; 
		\draw[gray, dashed] (1,1)--(1,-1) ; 
		\draw[gray, dashed] (1,-1)--(-1,-1) ; 
		\draw[->] (-3,0)--(3,0) node[below right]{{$\primal_1$}};
		\draw[->] (0,-3)--(0,3) node[above left]{{$\primal_2$}};
		\draw[blue, thick] (0,0)--(2.5*1/2, {2.5*sqrt(3)/2});
		\draw[blue, thick] (0,0)--(2.5*1/2, {-2.5*sqrt(3)/2});
		\draw[black, thick] (0,0) circle (2pt);
		\filldraw[white, thick] (0,0) circle (1.5pt);
		\draw[red, thick] (1,-1) -- (1, 1);
		\draw[red, thick] ({1/sqrt(3)}, 1) -- (1, 1);
		\draw[red, thick] ({1/sqrt(3)}, -1) -- (1, -1);
		\end{tikzpicture} 
		\caption{\(\Cone\) in blue and \(\RadialProjection\np{\Cone}\) in red}
		\label{fig:K3_intersect_l_infty}
	\end{subfigure}
	\hfill
	\begin{subfigure}[b]{0.5\linewidth}
		\centering
		\begin{tikzpicture}[scale=0.8]
		\draw[gray, dashed] (-1,-1)--(-1,1) ;
		\draw[gray, dashed] (-1,1)--(1,1) ; 
		\draw[gray, dashed] (1,1)--(1,-1) ; 
		\draw[gray, dashed] (1,-1)--(-1,-1) ;
		\filldraw[cyan!30] ({1/sqrt(3)}, 1) -- ({1/sqrt(3)}, -1) 
		-- (1,-1) -- (1, 1) -- cycle ;
		\draw[cyan, thick] ({1/sqrt(3)}, 1) -- ({1/sqrt(3)}, -1) 
		-- (1,-1) -- (1, 1) -- cycle ;
		\draw[red, thick] (1,-1) -- (1, 1);
		\draw[red, thick] ({1/sqrt(3)}, 1) -- (1, 1);
		\draw[red, thick] ({1/sqrt(3)}, -1) -- (1, -1);
		\draw[->] (-3,0)--(3,0) node[below right]{{$\primal_1$}};
		\draw[->] (0,-3)--(0,3) node[above left]{{$\primal_2$}};
		%
		\end{tikzpicture}
		\caption{$\closedconvexhull\bp{\RadialProjection\np{\Cone}}$ in cyan
			and \(\closedconvexhull\bp{\RadialProjection\np{\Cone}} \cap \SPHEREzero \) in red.
		} 
		\label{fig:co_n_K3_intersect_l_infty}  
	\end{subfigure}
	\caption{Illustration of Theorem~\ref{th:set_inter_spherezero}. Example of a set~\(\Cone\) for which \(\RadialProjection\np{\Cone}\)
		(in red in Figure~\ref{fig:K3_intersect_l_infty})
		is equal to \(\closedconvexhull\bp{\RadialProjection\np{\Cone}} \cap \SPHEREzero \)
		(in red in Figure~\ref{fig:co_n_K3_intersect_l_infty})
		when the source norm is~\(\Norm{\cdot}_\infty\)}
	\label{fig:illustration_theorem_set_inter_spherezero}
\end{figure}

\subsection{\Capra-convex conical hulls and relationship with spherical convexity}
\label{sec:capra_convex_conical_hull}%

First, we discuss in~\S\ref{Closed_convex_factorization_of_Capra-convex_problems}
the role of \Capra-convex conical hulls in optimization problems. 
Second,
in~\S\ref{Sufficient_conditions_for_a_set_to_have_a_Capra-convex_conical_hull} 
we give sufficient conditions for a set to have a \Capra-convex conical hull. 
Finally, we end in~\S\ref{relationship_with_spherical_convexity}
with a comparison between \Capra-convex sets and spherically-convex sets.

\subsubsection{Closed convex factorization of \Capra-convex problems}
\label{Closed_convex_factorization_of_Capra-convex_problems}

\begin{subequations}
\Capra-convex constrained minimization problems enjoy a closed convex factorization property.
More specifically, let us consider the optimization problem
\begin{equation}
  \inf_{\primal \in \SubSet} \objective(\primal)
  \eqfinv
\label{eq:optimization}
\end{equation}
defined after an objective function
$\objective \colon \PRIMAL \to \RR$ and a constraint set $\SubSet \subseteq \PRIMAL$.
If both the objective function
$\objective$
and the indicator function 
$\Indicator\SubSet$ 
are \Capra-convex, 
then we recall, according to
\cite[Proposition 2.6]{Chancelier-DeLara:2021_ECAPRA_JCA}, that the function 
$\objective + \Indicator\SubSet$
factorizes as $\objective + \Indicator\SubSet 
= \Objective \circ \RadialProjection$, 
where $\Objective \colon \PRIMAL \to \barRR$
is a proper \lsc\ convex function, so that Equation~\eqref{eq:optimization} becomes
\begin{equation}
\inf_{\primal \in \SubSet} \objective(\primal)
=
\inf_{\primal \in \PRIMAL} \Objective \circ \RadialProjection\np{\primal}
=
\inf_{\sphere \in \SPHEREzero} \Objective \np{\sphere}
=\inf\bp{\Objective
  \np{0},\inf_{\sphere \in \SPHERE} \Objective \np{\sphere} }
\eqfinp
\label{eq:closed_convex_over_sphere}
\end{equation}
In such a case, solving the optimization Problem~\eqref{eq:optimization}
amounts to minimizing the closed convex~\cite[p.~15]{Rockafellar:1974} function 
$\Objective $ over the unit sphere~$\SPHERE$. 
\end{subequations}

However, minimization problems involving \Capra-convex objective functions are usually \emph{not} defined on a \Capra-convex set of constraints.
For instance, among minimal cardinality problems, i.e. instances with $\objective = \lzero$ in Problem~\eqref{eq:optimization},
the fundamental sparse problem in compressive sensing has a constraint set
of type $\SubSet = \defset{\primal \in \PRIMAL}{A\primal = b}$ (see e.g.~\cite{Foucart-Rauhut:2013}).
Yet, when $b\neq0$, the resulting affine space is not a cone, so that $\SubSet$ 
cannot be a \Capra-convex set according to
Proposition~\ref{pr:cone}.

Nonetheless, \Capra-convex functions~\(\fonctionprimal \colon \PRIMAL \to \barRR\) are \(0\)-homogeneous, 
that \(\fonctionprimal\np{\lambda \primal} = \fonctionprimal\np{\primal}\), for any \(\primal \in \PRIMAL\) and any real number~\(\lambda >0\). 
Thus, for such functions, the problem~\eqref{eq:optimization} can be rewritten as
\begin{equation}
  \label{eq:capra_convexification} 
  \inf_{\bisprimal \in \conicalhull\np{\SubSet}} \objective(\bisprimal) \eqfinv
\end{equation}
using the change of variable \(\primal = \lambda \bisprimal, \bisprimal \in \PRIMAL,\lambda >0\). 
Therefore, even if the set of constraints~\(\SubSet\) is
 not \Capra-convex, its conical hull~\(\conicalhull\np{\SubSet}\) 
 might be \Capra-convex, 
 under some assumptions that we now investigate on.

\subsubsection{Sufficient conditions for a set to have a \Capra-convex conical
  hull}
\label{Sufficient_conditions_for_a_set_to_have_a_Capra-convex_conical_hull}

We now provide conditions for a set 
$\SubSet \subseteq \PRIMAL$ to have a \Capra-convex conical hull.
We restrict our interest to cases where $0 \notin \SubSet$,
which correspond to nontrivial instances
of minimal cardinality problems, where $\objective = \lzero$ in Problem~\eqref{eq:optimization}.

\begin{proposition}
  \label{pr:coneX}
  Let $\SubSet \subseteq \PRIMAL$
  be a compact set such that $0 \notin \convexhull(\SubSet)$.
  Let $\DoubleNorm{\cdot}$ be a source norm on $\PRIMAL$
  and $\CouplingCapra$ be the corresponding
  Capra-coupling as in Definition~\ref{de:Capra}.
  
  If one of the following conditions is satisfied:
  \begin{itemize}
    \item[\((i)\)] the unit ball of the source norm~$\DoubleNorm{\cdot}$  is rotund;

    \item[\((ii)\)] the set~\(\SubSet\) is convex;
  \end{itemize} 

  then $\conicalhull(\SubSet)$ is \Capra-convex.
\end{proposition}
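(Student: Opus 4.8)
The plan is to apply the characterization of Theorem~\ref{th:set_inter_spherezero} to the set $\conicalhull\np{\SubSet}$ in the role of~$\Cone$. This set is a cone by construction, so it remains only to establish the equality $\RadialProjection\bp{\conicalhull\np{\SubSet}} = \closedconvexhull\Bp{\RadialProjection\bp{\conicalhull\np{\SubSet}}} \cap \SPHEREzero$. Since $\RadialProjection$ is zero-homogeneous by~\eqref{eq:zero_homogeneity}, one has $\RadialProjection\bp{\conicalhull\np{\SubSet}} = \RadialProjection\np{\SubSet}$; writing $A = \RadialProjection\np{\SubSet}$, and noting that $0 \notin \convexhull\np{\SubSet} \supseteq \SubSet$ keeps the compact set $\SubSet$ away from the origin, the image $A$ is a \emph{compact} subset of the unit sphere~$\SPHERE$ (continuous image under $\RadialProjection$ of a compact set avoiding~$0$). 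The whole proof thus reduces to the set equality $A = \closedconvexhull\np{A} \cap \SPHEREzero$. The inclusion $A \subseteq \closedconvexhull\np{A} \cap \SPHEREzero$ is immediate from $A \subseteq \SPHERE \subseteq \SPHEREzero$, so the content lies entirely in the reverse inclusion $\closedconvexhull\np{A} \cap \SPHEREzero \subseteq A$, which is where hypothesis~$(i)$ or~$(ii)$ enters.

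First I would dispose of the origin, uniformly for both cases. Since $\SubSet$ is compact, $\convexhull\np{\SubSet}$ is compact, and $0 \notin \convexhull\np{\SubSet}$, so strict separation yields a vector $\dual \in \DUAL$ and a scalar $\alpha > 0$ with $\proscal{\primal}{\dual} \geq \alpha$ for all $\primal \in \SubSet$. By compactness and $0 \notin \SubSet$, there is $M > 0$ with $\DoubleNorm{\primal} \leq M$ on $\SubSet$, so for every $a = \RadialProjection\np{\primal} \in A$ we get $\proscal{a}{\dual} = \proscal{\primal}{\dual}/\DoubleNorm{\primal} \geq \alpha/M > 0$. This linear inequality passes to $\closedconvexhull\np{A}$, so $0 \notin \closedconvexhull\np{A}$, and consequently $\closedconvexhull\np{A} \cap \SPHEREzero = \closedconvexhull\np{A} \cap \SPHERE$. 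It then suffices to prove $\closedconvexhull\np{A} \cap \SPHERE \subseteq A$.

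Under hypothesis~$(i)$ I would argue with extreme points. Since $A \subseteq \SPHERE$, we have $\closedconvexhull\np{A} \subseteq \closedconvexhull\np{\SPHERE} = \BALL$. Take $z \in \closedconvexhull\np{A} \cap \SPHERE$; then $z$ lies on the boundary of~$\BALL$, and rotundity makes $z$ an extreme point of~$\BALL$. As $z$ belongs to the convex subset $\closedconvexhull\np{A} \subseteq \BALL$, it is \emph{a fortiori} an extreme point of $\closedconvexhull\np{A}$ (any representation $z = \tfrac12(c_1+c_2)$ with $c_1,c_2 \in \closedconvexhull\np{A} \subseteq \BALL$ forces $c_1 = c_2 = z$ by extremality in~$\BALL$). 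Since $A$ is compact, Milman's theorem (the partial converse of Krein--Milman, see~\cite{Bauschke-Combettes:2017}) guarantees that every extreme point of $\closedconvexhull\np{A}$ lies in~$A$; hence $z \in A$, as wanted.

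Under hypothesis~$(ii)$ I would instead exploit convexity of the conical hull. When $\SubSet$ is convex, the conical hull $\conicalhull\np{\SubSet}$ is convex, so from $A \subseteq \conicalhull\np{\SubSet}$ we get $\convexhull\np{A} \subseteq \conicalhull\np{\SubSet}$ and therefore $\closedconvexhull\np{A} \subseteq \closure{\conicalhull\np{\SubSet}}$. Because $\SubSet$ is compact with $0 \notin \SubSet$, a short argument on bounded scalings (any limit of $\lambda_k \primal_k$ with $\lambda_k > 0$, $\primal_k \in \SubSet$, has $\lambda_k$ bounded, since $\DoubleNorm{\primal_k}$ is bounded below, so the only limit point to adjoin is the origin) shows $\closure{\conicalhull\np{\SubSet}} = \conicalhull\np{\SubSet} \cup \na{0}$. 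Intersecting with~$\SPHERE$ removes the origin, giving $\closedconvexhull\np{A} \cap \SPHERE \subseteq \closure{\conicalhull\np{\SubSet}} \cap \SPHERE = \conicalhull\np{\SubSet} \cap \SPHERE = A$, which closes the argument. I expect the main obstacle to be precisely this reverse inclusion $\closedconvexhull\np{A} \cap \SPHERE \subseteq A$, i.e. controlling which boundary points of the closed convex hull survive on the sphere: it is exactly the step that can fail for non-rotund balls (the source of the counterexample announced after Corollary~\ref{co:rotund_balls}), and it is handled by two genuinely different mechanisms, Milman's theorem in case~$(i)$ and the identification $\closure{\conicalhull\np{\SubSet}} = \conicalhull\np{\SubSet} \cup \na{0}$ in case~$(ii)$.
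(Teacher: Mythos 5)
Your proof is correct, but it follows a genuinely different route from the paper's. The paper does not verify the characterization of Theorem~\ref{th:set_inter_spherezero} directly: for case~$(i)$ it excludes the origin from $\closedconvexhull\bp{\RadialProjection\np{\conicalhull(\SubSet)}}$ by a Carath\'eodory argument (a convex combination of normalized points summing to $0$ is rescaled into a convex combination of points of $\SubSet$ summing to $0$, contradicting $0\notin\convexhull(\SubSet)$) and then invokes the reverse implication of Corollary~\ref{co:rotund_balls}, whose own proof relies on an external result on rotund balls; for case~$(ii)$ it shows that $\positivehull(\SubSet)$ is a closed convex \emph{pointed} cone and applies Corollary~\ref{co:closed_convex_cones}$(ii)$. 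You instead reduce everything to the single set equality $A=\closedconvexhull\np{A}\cap\SPHEREzero$ for the compact spherical set $A=\RadialProjection\np{\SubSet}$, dispose of the origin once and for all by strict separation of $0$ from the compact set $\convexhull(\SubSet)$, and then handle the sphere part by an extreme-point/Milman argument in case~$(i)$ and by the closure identification $\closure{\conicalhull\np{\SubSet}}=\conicalhull\np{\SubSet}\cup\na{0}$ (which is Lemma~\ref{le:closeness} plus the trivial observation that $0$ is a limit of $\lambda x$ as $\lambda\downarrow 0$) in case~$(ii)$. Your treatment is more self-contained with respect to the corollaries, and the separation argument is a clean substitute for the paper's Carath\'eodory rescaling; the underlying geometric mechanism in the rotund case (a sphere point of a convex hull of spherical points must be one of those points) is the same as the one the paper imports by citation, but you prove it on the spot. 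The paper's route is shorter given that Corollaries~\ref{co:rotund_balls} and~\ref{co:closed_convex_cones} are already established, and it additionally records the pointedness of $\positivehull(\SubSet)$, which your argument sidesteps. Two minor points to tidy up: the attribution of Milman's partial converse to \cite{Bauschke-Combettes:2017} should be checked (in finite dimension the fact that extreme points of $\convexhull\np{A}$ lie in the compact set $A$ admits a two-line direct proof, which you may as well give), and you should note explicitly that the case $\SubSet=\emptyset$ is trivial so that the separation theorem applies.
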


We introduce two examples to emphasize that,
without the two key assumptions of Proposition~\ref{pr:coneX},
the conical hull of a set $\SubSet \subseteq \PRIMAL$ might fail to be \Capra-convex.

We start with an example where $0 \in \convexhull(\SubSet)$.

\begin{example}
  Let the set $\SubSet \subset \RR^2$ be the
  ball of center $(1, 0)$ and radius $1$.
  This set is closed, convex and bounded
  with $0 \in \SubSet$.
  Its conical hull is 
  $\conicalhull\np{\SubSet} = \defset{\primal \in \RR^2}{\primal_1 > 0} \cup \na{0}$,
  which is not \Capra-convex from Corollary~\ref{co:rotund_balls},
  as $\conicalhull\np{\SubSet} \cup \na{0}$
  is not closed (consider for instance the sequence \(\na{\np{1/k,1}}_{k\geq 1} \subset \conicalhull\np{\SubSet} \cup \na{0}\)).
\end{example}

Similarly, we give an example to illustrate that 
the conical hull $\conicalhull\np{\SubSet}$ might fail to be \Capra-convex, if the 
set~\(\SubSet\) is not compact.
In the following example, we get back to the 
fundamental sparse minimization problem from compressive sensing~\cite{Foucart-Rauhut:2013}.

\begin{example}[Figure~\ref{fig:affine_space_without_bounds}]
  Let the set~$\SubSet$ in Problem~\eqref{eq:optimization} be the affine space $H$ of solutions
  of the linear system $Ax = b$,   with a nonzero matrix $A \in \RR^{m\times n}$ and a nonzero vector $b \in \RR^m$.
  If $\text{ker}A \neq \na{0}$, then $\conicalhull(\SubSet)$ is not \Capra-convex.
  \label{ex:affine_space}%
\end{example}

\begin{proof}
  Let $\primal \in \text{ker}A$ be nonzero. Let $\bar{\primal} \in \SubSet$.
  Let $\sequence{\varepsilon_k}{k \in \NN}$ be a positive sequence converging to~$0$.
  For $k \in \NN$, we define $\primal_k = \primal + \varepsilon_k \bar{\primal}$ and
  $\primal_k' = -\primal + \varepsilon_k \bar{\primal}$.
  As $A(\frac{\primal_k}{\varepsilon_k}) = A(\frac{\primal}{\varepsilon_k}) + A(\bar{\primal})=
0+b = b=A(\frac{\primal_k'}{\varepsilon_k})$,
  we get that $\na{\primal_k, \primal_k'} \subset \conicalhull(\SubSet)$.
  It follows that 
  $\na{\RadialProjection(\primal_k), \RadialProjection(\primal_k')} \subset \conicalhull(\SubSet) \cap \SPHEREzero$
  as $\RadialProjection(\PRIMAL)=\SPHEREzero$.
  Now, we observe that
  \begin{align*}
    \frac{1}{2} \RadialProjection(\primal_k) + \frac{1}{2} \RadialProjection(\primal_k')
    &= 
    \frac{1}{2} \Bp{
      \frac{\primal + \varepsilon_k \bar{\primal}}{\TripleNorm{\primal + \varepsilon_k \bar{\primal}}}
      - \frac{\primal - \varepsilon_k \bar{\primal}}{\TripleNorm{\primal - \varepsilon_k \bar{\primal}}}
    }
      \tag{by~\eqref{eq:normalization_mapping}, as \( \primal_k=\primal + \varepsilon_k \bar{\primal} \neq 0 \) since \( A(\frac{\primal_k}{\varepsilon_k}) = b\neq 0 \),
      and the same for \( \primal - \varepsilon_k \bar{\primal} \)} 
      \\
    &\rightarrow_{k\to +\infty}
      \frac{1}{2} \Bp{ \frac{\primal}{\TripleNorm{\primal}} - \frac{\primal}{\TripleNorm{\primal}} } =0
    \eqfinp    
  \end{align*}
  We deduce that 
  $0 \in \closedconvexhull\bp{\conicalhull(\SubSet) \cap \SPHEREzero}$.
  However,
  $0 \notin \conicalhull(\SubSet)$, as
  $A(0) = 0 \neq b$ hence $0 \notin \SubSet$.
  Thus, using~\eqref{eq:cone_inter_spherezero}, we obtain that the third
  condition in the right-hand-side of~\eqref{eq:capra_convex_sets}
  is not satisfied by $\conicalhull(\SubSet)$.
  We conclude that the set $\conicalhull(\SubSet)$
  is not \Capra-convex, from Corollary~\eqref{co:implication}.
\end{proof}

Lastly, regarding Example~\ref{ex:affine_space},
we observe that if we work with a bounded
subset $\SubSet \subset H$ of the affine space $H$,
we retrieve the property that $\conicalhull(\SubSet)$ is \Capra-convex,
from Proposition~\ref{pr:coneX}. This case is illustrated in Figure~\ref{fig:affine_space_with_bounds}.

\begin{figure}[H]
    \begin{subfigure}[b]{0.5\linewidth}
      \centering
      \begin{tikzpicture}[scale=1.0]
        \begin{axis}[ 
          ticks=none,
          axis lines = middle,
          axis line style={->},
          ymin=-2, ymax=5,
          xmin=-1, xmax=6,
          xlabel={$\primal_1$},
          ylabel={$\primal_2$},
          x label style={at={(axis description cs:1.05,0.25)},anchor=center},
           y label style={at={(axis description cs:0.2,1.0)},anchor=center},
          axis equal image
          ]
          
          \draw[gray, dashed] (axis cs:0,0) circle [radius=1];
          

          \addplot[name path=affine_H, black, thick, domain=-1:6] {4-x};
          \addplot[black] coordinates {(1.5,3)} node[above]{$H$} ;

          \addplot[name path=ray-, blue,dashed, domain=-1:6] {-x};
          \addplot[name path=ray+, blue!30, domain=-1:6] {0*x +5};
          \addplot[name path=rayB, blue!30, domain=2:6] {0*x -2};
          \addplot[color=blue!30, opacity=0.5]fill between[of=ray+ and ray-, soft clip={domain=-1:2}];
          \addplot[color=blue!30, opacity=0.5]fill between[of=ray+ and rayB, soft clip={domain=2:6}];
          \addplot[blue] coordinates {(4,2.5)} node[below]{$ \conicalhull \np{H} $} ;

          \addplot[black, thick, mark=*, only marks, mark size=2pt] coordinates {(0,0)} ;
          \addplot[white, thick, mark=*, only marks, mark size=1.5pt] coordinates {(0,0)} ;
              
        \end{axis}
      \end{tikzpicture}
      \caption{Affine space $H$ in black and $ \conicalhull \np{H} $ in blue}
      \label{fig:affine_space_without_bounds}
    \end{subfigure}
    \hfill
    \begin{subfigure}[b]{0.5\linewidth}
    	\centering
    	\begin{tikzpicture}[scale=1.0]
    	\begin{axis}[ 
    	ticks=none,
    	axis lines = middle,
    	axis line style={->},
    	ymin=-2, ymax=5,
    	xmin=-1, xmax=6,
    	xlabel={$\primal_1$},
    	ylabel={$\primal_2$},
    	x label style={at={(axis description cs:1.05,0.25)},anchor=center},
    	y label style={at={(axis description cs:0.2,1.0)},anchor=center},
    	axis equal image
    	]
    	
    	\draw[gray, dashed] (axis cs:0,0) circle [radius=1];
    	
    	
    	\addplot[name path=affine_H, black, thick, domain=-1:3] {4-x};
    	\addplot[name path=affine_H, black, thick, domain=5:6] {4-x};
    	\addplot[black] coordinates {(1.5,3)} node[above]{$H$} ;
    	
    	\addplot[name path=upp_bound, black, dashed, domain=-1:6] {1};
    	\addplot[name path=low_bound, black, dashed, domain=-1:6] {-1};

    	\addplot[name path=ray+, blue, domain=0:6] {0.33*x};
    	\addplot[name path=ray-, blue, domain=0:6] {-0.2*x};
    	\addplot[color=blue!30, opacity=0.5]fill between[of=ray+ and ray-, soft clip={domain=0:6}];
    	
    	\addplot[name path=affine_H, red, thick, domain=3:5] {4-x};
    	\addplot[red] coordinates {(4,-1)} node[below]{$\SubSet$} ;
    	\addplot[blue] coordinates {(4,2.5)} node[below]{$ \conicalhull \np{\SubSet} $} ;
    	
    	\addplot[black, thick, mark=*, only marks, mark size=2pt] coordinates {(0,0)} ;
    	\addplot[white, thick, mark=*, only marks, mark size=1.5pt] coordinates {(0,0)} ;
    	
    	\end{axis}
    	\end{tikzpicture}
    	\caption{Bounded subset $\SubSet$ in red and $ \conicalhull \np{\SubSet} $ in blue}
    	\label{fig:affine_space_with_bounds}
    \end{subfigure}
    \caption{Illustration of Example~\ref{ex:affine_space}, 
	where $\conicalhull\np{H}$ in Fig.~\ref{fig:affine_space_without_bounds} (left) is not \Capra-convex,
	but $\conicalhull\np{\SubSet}$ in Fig.~\ref{fig:affine_space_with_bounds} (right) is \Capra-convex}
    \label{fig:affine_space}
  \end{figure}

\subsubsection{Relationship with spherical convexity}
\label{relationship_with_spherical_convexity}

On the one hand, \emph{spherically-convex sets} are defined using the unit sphere~\(\SPHERE\) and the associated radial projection~\(\RadialProjection \colon \PRIMAL \to \SPHEREzero\)
\cite{Ferreira-Iusem-Nemeth:2014,Guo-Peng:2021}.
Indeed, following~\cite[Definition 2.5]{Guo-Peng:2021}\footnote{We use the definition of spherically-convex from~\cite{Guo-Peng:2021} where more general spheres are considered than in~\cite{Ferreira-Iusem-Nemeth:2014} where the focus is on the Euclidean sphere.
The equivalence of the definitions for the Euclidean sphere is proved by the characterizations~\cite[Proposition~2]{Ferreira-Iusem-Nemeth:2014} and~\cite[Proposition~2.7~(iv)]{Guo-Peng:2021}.
}
spherically-convex sets are subsets \(\SubSet \subset \SPHERE\) of the unit sphere~$\SPHERE$
that are defined by 
\begin{equation}
\SubSet \text{ is spherically-convex if }
\RadialProjection\bp{
  \lambda \primal + \np{1-\lambda} \primalbis
} \in \SubSet
\eqsepv 
\forall \primal, \primalbis \in \SubSet
\eqsepv \forall \lambda \in \ClosedIntervalClosed{0}{1}
\eqfinp
\label{eq:spherical_convexity}
\end{equation}

On the other hand, the unit sphere~\(\SPHERE\) and the associated radial projection~\(\RadialProjection \colon \PRIMAL \to \SPHEREzero\) also appear in the 
characterization~\eqref{eq:set_inter_spherezero} of \Capra-convex sets in Theorem~\ref{th:set_inter_spherezero}.
Hence, it begs the question: \emph{is there a link between \Capra-convex sets and spherically-convex sets?}

First, \Capra-convex sets are necessarily cones in~$\PRIMAL$ while spherically-convex sets are subsets of the unit sphere~$\SPHERE$. Thus, to make a relevant comparison, we will consider \Capra-convex sets and conical hulls in~$\PRIMAL$ of spherically-convex sets. We answer the following questions.
\begin{enumerate}
\item
  Is the conical hull of a spherically-convex set a \Capra-convex set? Not necessarily. 
  On the one hand, a spherically-convex set~\(\SubSet\) is not necessarily closed, according to the characterization~\cite[Proposition  2.7 (iv)]{Guo-Peng:2021}, so \(\conicalhull\np{\SubSet} \cup \na{0}\) is not necessarily closed.
  On the other hand, a \Capra-convex set~\(\Cone\) is such that \(\Cone \cup \na{0}\) is closed, according to Corollary~\ref{co:rotund_balls}.
  \item
    Is the topological closure of the conical hull of a spherically-convex set a \Capra-convex set? Yes, see Proposition~\ref{pr:spherical_convex_implies_capra_convex}.

  \item
    Is the intersection of a \Capra-convex set with the unit sphere 
    a spherically-convex set? Not necessarily. See Figure~\ref{fig:K2} which gives a counterexample of a \Capra-convex set~\(\Cone\) 
    which is not the conical hull of some spherically-convex set. Indeed, the set \(\Cone\) is not convex, while conical hulls of spherically-convex sets are convex, according to~\cite[Proposition~2]{Ferreira-Iusem-Nemeth:2014}.
\end{enumerate}

\begin{proposition}
  If the set $\SubSet \subset \SPHERE$ is spherically-convex, then
   \(\closedcone\np{\SubSet}\) 
   is \Capra-convex.  
   Moreover, if \(\closedcone\np{\SubSet}\) is pointed 
   then \(\closedcone\np{\SubSet} \backslash \na{0} \) is \Capra-convex.
  \label{pr:spherical_convex_implies_capra_convex}
\end{proposition}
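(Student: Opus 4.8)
The plan is to reduce the whole statement to Corollary~\ref{co:closed_convex_cones} by establishing that $\closedcone\np{\SubSet}$ is a closed convex cone. Once that reduction is in place, part~$(i)$ of the corollary immediately yields that $\closedcone\np{\SubSet}$ is \Capra-convex, and part~$(ii)$ yields that $\closedcone\np{\SubSet}\setminus\na{0}$ is \Capra-convex in the pointed case. Thus the entire proposition follows as soon as the three structural properties --- cone, convex, closed --- are verified for $\closedcone\np{\SubSet}$, and no direct appeal to the more technical characterization~\eqref{eq:set_inter_spherezero} is needed.

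First I would establish convexity of the conical hull, which is exactly where the hypothesis of spherical convexity enters. By~\cite[Proposition~2]{Ferreira-Iusem-Nemeth:2014}, the conical hull of a spherically-convex subset of the unit sphere~$\SPHERE$ is convex, so $\conicalhull\np{\SubSet}$ is convex. Since $\closedcone\np{\SubSet}$ is by definition the topological closure of $\conicalhull\np{\SubSet}$, and since the closure of a convex set is convex, it follows that $\closedcone\np{\SubSet}$ is convex.

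Next I would check the cone property. The set $\conicalhull\np{\SubSet}=\defset{\lambda\primal}{\primal\in\SubSet\eqsepv \lambda>0}$ is a cone by construction, and the closure of a cone is again a cone: if $y=\lim_k y_k$ with $y_k\in\conicalhull\np{\SubSet}$, then for any $\lambda>0$ one has $\lambda y=\lim_k \lambda y_k$ with $\lambda y_k\in\conicalhull\np{\SubSet}$, whence $\lambda y\in\closedcone\np{\SubSet}$. Closedness is automatic, as $\closedcone\np{\SubSet}$ is a topological closure. Therefore $\closedcone\np{\SubSet}$ is a closed convex cone, and applying Corollary~\ref{co:closed_convex_cones}$(i)$ with $\Cone=\closedcone\np{\SubSet}$ proves the first assertion, while Corollary~\ref{co:closed_convex_cones}$(ii)$ proves the second under the pointedness assumption.

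The argument is essentially routine once the reduction is made; the only genuinely external ingredient is the convexity of the conical hull of a spherically-convex set, which I would import verbatim from~\cite[Proposition~2]{Ferreira-Iusem-Nemeth:2014}. The point deserving the most care --- and which I would flag as the main, albeit minor, obstacle --- is to confirm that passing to the topological closure preserves \emph{both} convexity and the cone property simultaneously, so that $\closedcone\np{\SubSet}$ genuinely falls within the scope of Corollary~\ref{co:closed_convex_cones}; both preservations are standard facts, but each must be invoked explicitly rather than taken for granted.
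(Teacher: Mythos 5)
Your proof follows essentially the same route as the paper: both arguments reduce the statement to Corollary~\ref{co:closed_convex_cones} by establishing that $\closedcone\np{\SubSet}$ is a closed convex cone, and both draw the crucial convexity from a known characterization of spherically-convex sets. The one place where you diverge --- and where some care is needed --- is the external reference. You import convexity of $\conicalhull\np{\SubSet}$ from \cite[Proposition~2]{Ferreira-Iusem-Nemeth:2014}, but, as the paper's own footnote points out, that reference treats only the Euclidean sphere, whereas the proposition is stated for the unit sphere of an arbitrary source norm $\DoubleNorm{\cdot}$ and the definition of spherical convexity in~\eqref{eq:spherical_convexity} is the one of Guo--Peng, which covers general norms. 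The paper instead invokes \cite[Proposition~2.7~(iv)]{Guo-Peng:2021}, which asserts that the positive hull $\positivehull\np{\SubSet}$ of a spherically-convex set is convex and pointed; since $\overline{\positivehull}\np{\SubSet}=\closedcone\np{\SubSet}$ here, this yields the same conclusion in the general setting. With that substitution your argument is complete: the remaining steps (closure preserves convexity and the cone property, closedness is automatic, and pointedness of $\closedcone\np{\SubSet}$ is a hypothesis of the second assertion rather than something to prove) are correct and are exactly what the paper relies on, if only implicitly.
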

\begin{proof}
  Using the characterization~\cite[Proposition  2.7 (iv)]{Guo-Peng:2021},
 if \(\SubSet\) is spherically-convex then its positive hull~\(\positivehull\np{\SubSet} \subset \PRIMAL\) is convex and pointed. We therefore have that
  \( \overline{\positivehull}\np{\SubSet}\) is a closed convex cone
  and as \( \overline{\positivehull}\np{\SubSet} = \closedcone \np{\SubSet} \)
  that \(\overline{\positivehull}\np{\SubSet}\) is \Capra-convex  by Corollary~\ref{co:closed_convex_cones}, Item~\((i)\).
   Moreover if \( \closedcone\np{\SubSet}\) remains pointed we obtain that
  \(\closedcone\np{\SubSet} \backslash \na{0} \) is \Capra-convex  by Corollary~\ref{co:closed_convex_cones}, Item~\((i)\).
\end{proof}

\section{Examples of \Capra-convex sets}
\label{sec:various_examples}

In this section, we showcase examples of \Capra-convex sets.
We start with generic examples of \Capra-convex sets for the \(\ell_p\) source norms in~\S\ref{subsec:lp_examples},
and then discuss the properties of the
the sublevel sets and the epigraph of \Capra-convex functions in~\S\ref{subsec:Capra-convex_epigraphs_and_sublevel_sets_of_functions}.

\subsection{Examples with the $\ell_p$ source norms}
\label{subsec:lp_examples}%

We now consider the source norms $\DoubleNorm{\cdot} = \Norm{\cdot}_p$
with $p \in \nc{1,\infty}$.
Our interest in this type of norms
is based on a thorough analysis of the
\Capra-convexity of~$\lzero$ and of the expression
of its \Capra-subdifferential for 
all values of $p \in \nc{1, \infty}$,
as exposed in~\cite{LeFranc-Chancelier-DeLara:2022}.
In particular, we concentrate on the cases
$p = 2$, for which the unit ball $\BALL_2$ is rotund,
and on the case $p = \infty$, for which 
the unit ball $\BALL_\infty$ fails to be rotund.

To illustrate the characterizations obtained
in~\S\ref{sec:definition_and_main_results},
we provide examples of cones in $\RR^2$ and comment on their \Capra-convexity.
Bearing in mind potential applications to sparse optimization,
we are specifically interested in cones $\Cone \subseteq \PRIMAL$
such that $0\notin\Cone$.
Indeed, when minimizing the \lzeropseudonorm\ on a set or, equivalently,
  on the conical hull of this set, we are not interested in trivial cases 
  where the minimum is attained at the origin.
Thus, we concentrate on the following examples:
\begin{subequations}
  \label{eq:cone_examples}%
  \begin{align}
    \Cone_1 &=
              \conicalhull\bp{\na{(1, 0), (-1, 1), (-1, -1)}}
              \text{\qquad (Figure~\ref{fig:K1})}
              \eqfinv
    \\ 
    \Cone_2 &=
              \conicalhull\bp{\na{(-1, 0), (-1, 1), (-1, -1)}}
              \text{\qquad (Figure~\ref{fig:K2})}
              \eqfinv
    \\ 
    \Cone_3 &=
              \conicalhull\Bp{{\convexhull\bp{\na{(1/2, \sqrt{3}/2), (1/2, -\sqrt{3}/2)}}}}
              \text{\qquad (Figure~\ref{fig:K3})}
              \eqfinp
  \end{align}
\end{subequations}

\subsubsubsection{Examples with the rotund unit ball of the $\ell_2$ norm $\Norm{\cdot}_2$}

Let us consider the source norm $\DoubleNorm{\cdot} = \Norm{\cdot}_2$.
We recall that, with this choice of source norm, the unit ball
$\BALL$ is rotund, so that we can rely on
Corollary~\ref{co:rotund_balls}
to characterize \Capra-convex sets.

We discuss the case of the three cones 
$\na{\Cone_i}_{i\in\na{1,2,3}}$
introduced in~\eqref{eq:cone_examples} 
and of the closed convex hull
of their image by the radial projection
$\RadialProjection$ in~\eqref{eq:normalization_mapping}, 
under the source norm $\DoubleNorm{\cdot} = \Norm{\cdot}_2$.
These sets are illustrated in Figure~\ref{fig:cones}.

For all three cones $\na{\Cone_i}_{i\in\na{1,2,3}}$, we have that
$\Cone_i \cup \na{0}$ is closed
and $0 \notin \Cone_i$.
It follows that,
according to Corollary~\ref{co:rotund_balls},
we only have to check the condition
$\closedconvexhull\bp{\RadialProjection\np{\Cone_i}} \cap \na{0} = \emptyset$
to assert the \Capra-convexity of these cones.

\begin{itemize}
\item For the cone $\Cone_1$ in Figure~\ref{fig:K1},
  we have that 
  $\closedconvexhull\bp{\RadialProjection\np{\Cone_1}} \cap \na{0} = \na{0}$
  in Figure~\ref{fig:co_n_K1}.
  We conclude from Corollary~\ref{co:rotund_balls}
  that $\Cone_1$ is not \Capra-convex for the choice
  of source norm $\DoubleNorm{\cdot} = \Norm{\cdot}_2$.
  
\item For the cone $\Cone_2$ in Figure~\ref{fig:K2},
  we have that 
  $\closedconvexhull\bp{\RadialProjection\np{\Cone_2}} \cap \na{0} = \emptyset$
  in Figure~\ref{fig:co_n_K2}.
  We conclude from Corollary~\ref{co:rotund_balls}
  that $\Cone_2$ is \Capra-convex for the choice
  of source norm $\DoubleNorm{\cdot} = \Norm{\cdot}_2$.

\item For the cone $\Cone_3$ in Figure~\ref{fig:K3},
  we have that 
  $\closedconvexhull\bp{\RadialProjection\np{\Cone_3}} \cap \na{0} = \emptyset$
  in Figure~\ref{fig:co_n_K3}.
  We conclude from Corollary~\ref{co:rotund_balls}
  that $\Cone_3$ is \Capra-convex for the choice
  of source norm $\DoubleNorm{\cdot} = \Norm{\cdot}_2$.
\end{itemize}

\begin{figure}[htbp]
  \begin{subfigure}[b]{0.5\linewidth}
    \centering
    \begin{tikzpicture}[scale=0.8]
      \draw[gray, dashed] (0,0) circle (1.0); 
      \draw[->] (-3,0)--(3,0) node[below right]{{$\primal_1$}};
      \draw[->] (0,-3)--(0,3) node[above left]{{$\primal_2$}};
      \draw[blue, thick] (0,0)--({sqrt(8)},0);
      \draw[blue, thick] (0,0)--(-2,2);
      \draw[blue, thick] (0,0)--(-2,-2);
      \draw[black, thick] (0,0) circle (2pt);
      \filldraw[white, thick] (0,0) circle (1.5pt);
    \end{tikzpicture} 
    \caption{$K_1$ is not \Capra-convex for $\DoubleNorm{\cdot} = \Norm{\cdot}_2$}
    \label{fig:K1} 
  \end{subfigure}
  \hfill
  \begin{subfigure}[b]{0.5\linewidth}
    \centering
    \begin{tikzpicture}[scale=0.8]
      \draw[gray, dashed] (0,0) circle (1.0); 
      \filldraw[cyan!30] (1,0) -- ({-sqrt(2)/2},{sqrt(2)/2}) -- ({-sqrt(2)/2},{-sqrt(2)/2}) -- cycle ;
      \draw[cyan, thick] (1,0) -- ({-sqrt(2)/2},{sqrt(2)/2}) -- ({-sqrt(2)/2},{-sqrt(2)/2}) -- cycle ;
      \draw[->] (-3,0)--(3,0) node[below right]{{$\primal_1$}};
      \draw[->] (0,-3)--(0,3) node[above left]{{$\primal_2$}};
      \filldraw[red, thick] (0,0) circle (3pt);
    \end{tikzpicture}
    \caption{$0 \in \closedconvexhull\bp{\RadialProjection\np{\Cone_1}}$ 
      for $\DoubleNorm{\cdot} = \Norm{\cdot}_2$ } 
    \label{fig:co_n_K1} 
  \end{subfigure}
  
  \vskip\baselineskip
  
  \begin{subfigure}[b]{0.5\linewidth}
    \centering
    \begin{tikzpicture}[scale=0.8]
      \draw[gray, dashed] (0,0) circle (1.0); 
      \draw[->] (-3,0)--(3,0) node[below right]{{$\primal_1$}};
      \draw[->] (0,-3)--(0,3) node[above left]{{$\primal_2$}};
      \draw[blue, thick] (0,0)--({-sqrt(8)},0);
      \draw[blue, thick] (0,0)--(-2,2);
      \draw[blue, thick] (0,0)--(-2,-2);
      \draw[black, thick] (0,0) circle (2pt);
      \filldraw[white, thick] (0,0) circle (1.5pt);
    \end{tikzpicture} 
    \caption{$K_2$ is \Capra-convex for $\DoubleNorm{\cdot} = \Norm{\cdot}_2$}
    \label{fig:K2} 
  \end{subfigure}
  \hfill
  \begin{subfigure}[b]{0.5\linewidth}
    \centering
    \begin{tikzpicture}[scale=0.8]
      \draw[gray, dashed] (0,0) circle (1.0); 
      \filldraw[cyan!30] (-1,0) -- ({-sqrt(2)/2},{sqrt(2)/2}) -- ({-sqrt(2)/2},{-sqrt(2)/2}) -- cycle ;
      \draw[cyan, thick] (-1,0) -- ({-sqrt(2)/2},{sqrt(2)/2}) -- ({-sqrt(2)/2},{-sqrt(2)/2}) -- cycle ;
      \draw[->] (-3,0)--(3,0) node[below right]{{$\primal_1$}};
      \draw[->] (0,-3)--(0,3) node[above left]{{$\primal_2$}};
      \filldraw[mygreen, thick] (0,0) circle (3pt);
    \end{tikzpicture}
    \caption{$0 \notin \closedconvexhull\bp{\RadialProjection\np{\Cone_2}}$  
      for $\DoubleNorm{\cdot} = \Norm{\cdot}_2$ } 
    \label{fig:co_n_K2} 
  \end{subfigure}
  
  \vskip\baselineskip
  
  \begin{subfigure}[b]{0.5\linewidth}
    \centering
    \begin{tikzpicture}[scale=0.8]
      \filldraw[blue!30] (0,0) -- (2.5*1/2, {2.5*sqrt(3)/2}) -- (2.5*1/2, {-2.5*sqrt(3)/2}) -- cycle;
      \draw[gray, dashed] (0,0) circle (1.0); 
      \draw[->] (-3,0)--(3,0) node[below right]{{$\primal_1$}};
      \draw[->] (0,-3)--(0,3) node[above left]{{$\primal_2$}};
      \draw[blue, thick] (0,0)--(2.5*1/2, {2.5*sqrt(3)/2});
      \draw[blue, thick] (0,0)--(2.5*1/2, {-2.5*sqrt(3)/2});
      \draw[black, thick] (0,0) circle (2pt);
      \filldraw[white, thick] (0,0) circle (1.5pt);
    \end{tikzpicture} 
    \caption{$K_3$ is \Capra-convex for $\DoubleNorm{\cdot} = \Norm{\cdot}_2$}
    \label{fig:K3}
  \end{subfigure}
  \hfill
  \begin{subfigure}[b]{0.5\linewidth}
    \centering
    \begin{tikzpicture}[scale=0.8]
      \draw[gray, dashed] (0,0) circle (1.0); 
      \filldraw[cyan!30] (1/2, {sqrt(3)/2}) -- (1/2, {-sqrt(3)/2})  arc (-60:60:1) -- cycle ;
      \draw[cyan, thick] (1/2, {sqrt(3)/2}) -- (1/2, {-sqrt(3)/2})  arc (-60:60:1) -- cycle ;
      \draw[->] (-3,0)--(3,0) node[below right]{{$\primal_1$}};
      \draw[->] (0,-3)--(0,3) node[above left]{{$\primal_2$}};
      %
      \filldraw[mygreen, thick] (0,0) circle (3pt);
    \end{tikzpicture}
    \caption{$0 \notin \closedconvexhull\bp{\RadialProjection\np{\Cone_3}}$ 
      for $\DoubleNorm{\cdot} = \Norm{\cdot}_2$ } 
    \label{fig:co_n_K3}  
  \end{subfigure}
  \caption{The cones $\na{\Cone_i}_{i\in\na{1,2,3}}$
    in~\eqref{eq:cone_examples} (left column) 
    and the closed convex hull
    of their image by the radial projection
    $\RadialProjection$ in~\eqref{eq:normalization_mapping}
    defined with the source norm
    $\DoubleNorm{\cdot} = \Norm{\cdot}_2$
    (right column)}
  \label{fig:cones}%
\end{figure}

The example of $\Cone_2$ in Figure~\ref{fig:K2} reveals that a cone
needs not be convex to be \Capra-convex. 
Also, we notice that the \Capra-convexity of 
$\Cone_3$ in Figure~\ref{fig:K3} can be directly deduced
from Corollary~\ref{co:closed_convex_cones},
as $\Cone_3$ is a closed convex pointed cone.

\subsubsubsection{Examples with the nonrotund unit ball of the $\ell_\infty$ norm $\Norm{\cdot}_\infty$}

Let us now consider the source norm $\DoubleNorm{\cdot} = \Norm{\cdot}_\infty$.
We recall that, with this choice of source norm, the unit ball
$\BALL$ is not rotund, so that we rely mostly on Theorem~\ref{th:set_inter_spherezero} and
Corollary~\ref{co:closed_convex_cones} to characterize \Capra-convex sets.

We discuss the case of the three cones 
$\na{\Cone_i}_{i\in\na{1,2,3}}$
introduced in~\eqref{eq:cone_examples} 
and of the closed convex hull
of their image by the radial projection
$\RadialProjection$ in~\eqref{eq:normalization_mapping}, 
under the source norm $\DoubleNorm{\cdot} = \Norm{\cdot}_\infty$.
These sets are illustrated in Figure~\ref{fig:cones_l_infty}.

\begin{itemize}
\item For the cone $\Cone_1$ in Figure~\ref{fig:K1_l_infty},
  we have that 
  $\Cone_1 \cap \SPHEREzero_\infty = \na{(1, 0), (-1, 1), (-1, -1)}$,
  whereas the intersection of 
  $\closedconvexhull\bp{\RadialProjection\np{\Cone_1}}$ in
  Figure~\ref{fig:co_n_K1_l_infty}
  with $\SPHEREzero_\infty$ gives a larger set
  --- which contains for instance the origin $0$.
  We conclude from Theorem~\ref{th:set_inter_spherezero}
  that $\Cone_1$ is not \Capra-convex for the choice
  of source norm $\DoubleNorm{\cdot} = \Norm{\cdot}_\infty$.
  
\item For the cone $\Cone_2$ in Figure~\ref{fig:K2_l_infty},
  we have that 
  $\Cone_2 \cap \SPHEREzero_\infty = \na{(-1, 0), (-1, 1), (-1, -1)}$
  whereas the intersection of
  $\closedconvexhull\bp{\RadialProjection\np{\Cone_2}}
  =\convexhull\bp{\na{(-1, 1), (-1, -1)}}$ in Figure~\ref{fig:co_n_K2_l_infty} with 
  $\SPHEREzero_\infty$ gives the set 
  $\convexhull\bp{\na{(-1, 1), (-1, -1)}}$.
  We conclude from Theorem~\ref{th:set_inter_spherezero}
  that $\Cone_2$ is not \Capra-convex for the choice
  of source norm $\DoubleNorm{\cdot} = \Norm{\cdot}_\infty$.
  
\item For the cone $\Cone_3$ in Figure~\ref{fig:K3_l_infty},
  we have that 
  $\Cone_3 \cap \SPHEREzero_\infty = \closedconvexhull\bp{\RadialProjection\np{\Cone_3}} \cap \SPHEREzero_\infty$ 
  (see the representation of 
  $\closedconvexhull\bp{\RadialProjection\np{\Cone_3}}$
  in Figure~\ref{fig:co_n_K3_l_infty}).
  We conclude from Theorem~\ref{th:set_inter_spherezero}
  that $\Cone_3$ is \Capra-convex for the choice
  of source norm $\DoubleNorm{\cdot} = \Norm{\cdot}_\infty$.
\end{itemize}

\begin{figure}[htbp]
  \begin{subfigure}[b]{0.5\linewidth}
    \centering
    \begin{tikzpicture}[scale=0.8]
      \draw[gray, dashed] (-1,-1)--(-1,1) ;
      \draw[gray, dashed] (-1,1)--(1,1) ; 
      \draw[gray, dashed] (1,1)--(1,-1) ; 
      \draw[gray, dashed] (1,-1)--(-1,-1) ;
      \draw[->] (-3,0)--(3,0) node[below right]{{$\primal_1$}};
      \draw[->] (0,-3)--(0,3) node[above left]{{$\primal_2$}};
      \draw[blue, thick] (0,0)--({sqrt(8)},0);
      \draw[blue, thick] (0,0)--(-2,2);
      \draw[blue, thick] (0,0)--(-2,-2);
      \draw[black, thick] (0,0) circle (2pt);
      \filldraw[white, thick] (0,0) circle (1.5pt);
    \end{tikzpicture} 
    \caption{$K_1$ is not \Capra-convex for $\DoubleNorm{\cdot} = \Norm{\cdot}_\infty$}
    \label{fig:K1_l_infty} 
  \end{subfigure}
  \hfill
  \begin{subfigure}[b]{0.5\linewidth}
    \centering
    \begin{tikzpicture}[scale=0.8]
      \draw[gray, dashed] (-1,-1)--(-1,1) ;
      \draw[gray, dashed] (-1,1)--(1,1) ; 
      \draw[gray, dashed] (1,1)--(1,-1) ; 
      \draw[gray, dashed] (1,-1)--(-1,-1) ;
      \filldraw[cyan!30] (1,0) -- (-1,1) -- (-1, -1) -- cycle ;
      \draw[cyan, thick] (1,0) -- (-1,1) -- (-1, -1) -- cycle ;
      \draw[->] (-3,0)--(3,0) node[below right]{{$\primal_1$}};
      \draw[->] (0,-3)--(0,3) node[above left]{{$\primal_2$}};
    \end{tikzpicture}
    \caption{$\closedconvexhull\bp{\RadialProjection\np{\Cone_1}}$ 
      for $\DoubleNorm{\cdot} = \Norm{\cdot}_\infty$} 
    \label{fig:co_n_K1_l_infty} 
  \end{subfigure}
  
  \vskip\baselineskip
  
  \begin{subfigure}[b]{0.5\linewidth}
    \centering
    \begin{tikzpicture}[scale=0.8]
      \draw[gray, dashed] (-1,-1)--(-1,1) ;
      \draw[gray, dashed] (-1,1)--(1,1) ; 
      \draw[gray, dashed] (1,1)--(1,-1) ; 
      \draw[gray, dashed] (1,-1)--(-1,-1) ;
      \draw[->] (-3,0)--(3,0) node[below right]{{$\primal_1$}};
      \draw[->] (0,-3)--(0,3) node[above left]{{$\primal_2$}};
      \draw[blue, thick] (0,0)--({-sqrt(8)},0);
      \draw[blue, thick] (0,0)--(-2,2);
      \draw[blue, thick] (0,0)--(-2,-2);
      \draw[black, thick] (0,0) circle (2pt);
      \filldraw[white, thick] (0,0) circle (1.5pt);
    \end{tikzpicture} 
    \caption{$K_2$ is not \Capra-convex for $\DoubleNorm{\cdot} = \Norm{\cdot}_\infty$}
    \label{fig:K2_l_infty} 
  \end{subfigure}
  \hfill
  \begin{subfigure}[b]{0.5\linewidth}
    \centering
    \begin{tikzpicture}[scale=0.8]
      \draw[gray, dashed] (-1,-1)--(-1,1) ;
      \draw[gray, dashed] (-1,1)--(1,1) ; 
      \draw[gray, dashed] (1,1)--(1,-1) ; 
      \draw[gray, dashed] (1,-1)--(-1,-1) ;
      \draw[cyan, very thick] (-1,1) -- (-1,-1) -- cycle ;
      \draw[->] (-3,0)--(3,0) node[below right]{{$\primal_1$}};
      \draw[->] (0,-3)--(0,3) node[above left]{{$\primal_2$}};
    \end{tikzpicture}
    \caption{$\closedconvexhull\bp{\RadialProjection\np{\Cone_2}}$ 
      for $\DoubleNorm{\cdot} = \Norm{\cdot}_\infty$ } 
    \label{fig:co_n_K2_l_infty} 
  \end{subfigure}
  
  \vskip\baselineskip
  
  \begin{subfigure}[b]{0.5\linewidth}
    \centering
    \begin{tikzpicture}[scale=0.8]
      \filldraw[blue!30] (0,0) -- (2.5*1/2, {2.5*sqrt(3)/2}) -- (2.5*1/2, {-2.5*sqrt(3)/2}) -- cycle;
      \draw[gray, dashed] (-1,-1)--(-1,1) ;
      \draw[gray, dashed] (-1,1)--(1,1) ; 
      \draw[gray, dashed] (1,1)--(1,-1) ; 
      \draw[gray, dashed] (1,-1)--(-1,-1) ; 
      \draw[->] (-3,0)--(3,0) node[below right]{{$\primal_1$}};
      \draw[->] (0,-3)--(0,3) node[above left]{{$\primal_2$}};
      \draw[blue, thick] (0,0)--(2.5*1/2, {2.5*sqrt(3)/2});
      \draw[blue, thick] (0,0)--(2.5*1/2, {-2.5*sqrt(3)/2});
      \draw[black, thick] (0,0) circle (2pt);
      \filldraw[white, thick] (0,0) circle (1.5pt);
    \end{tikzpicture} 
    \caption{$K_3$ is \Capra-convex for $\DoubleNorm{\cdot} = \Norm{\cdot}_\infty$}
    \label{fig:K3_l_infty}
  \end{subfigure}
  \hfill
  \begin{subfigure}[b]{0.5\linewidth}
    \centering
    \begin{tikzpicture}[scale=0.8]
      \draw[gray, dashed] (-1,-1)--(-1,1) ;
      \draw[gray, dashed] (-1,1)--(1,1) ; 
      \draw[gray, dashed] (1,1)--(1,-1) ; 
      \draw[gray, dashed] (1,-1)--(-1,-1) ;
      \filldraw[cyan!30] ({1/sqrt(3)}, 1) -- ({1/sqrt(3)}, -1) 
      -- (1,-1) -- (1, 1) -- cycle ;
      \draw[cyan, thick] ({1/sqrt(3)}, 1) -- ({1/sqrt(3)}, -1) 
      -- (1,-1) -- (1, 1) -- cycle ;
      \draw[->] (-3,0)--(3,0) node[below right]{{$\primal_1$}};
      \draw[->] (0,-3)--(0,3) node[above left]{{$\primal_2$}};
      %
    \end{tikzpicture}
    \caption{$\closedconvexhull\bp{\RadialProjection\np{\Cone_3}}$ 
      for $\DoubleNorm{\cdot} = \Norm{\cdot}_\infty$} 
    \label{fig:co_n_K3_l_infty}  
  \end{subfigure}
  \caption{The cones $\na{\Cone_i}_{i\in\na{1,2,3}}$
    in~\eqref{eq:cone_examples} (left column) 
    and the closed convex hull
    of their image by the radial projection
    $\RadialProjection$ in~\eqref{eq:normalization_mapping}
    defined with the source norm
    $\DoubleNorm{\cdot} = \Norm{\cdot}_\infty$
    (right column)}
  \label{fig:cones_l_infty}%
\end{figure}

It is interesting to notice that the cone $\Cone_2$
is not \Capra-convex for the source norm 
$\DoubleNorm{\cdot} = \Norm{\cdot}_\infty$ but
is \Capra-convex for the source norm
$\DoubleNorm{\cdot} = \Norm{\cdot}_2$.
In particular, we observe that the conditions
in the right-hand side of~\eqref{eq:capra_convex_sets}
are still fulfilled when 
$\DoubleNorm{\cdot} = \Norm{\cdot}_\infty$,
since, in this case, we have that
$\Cone_2 \cap \na{0} = \closedconvexhull\bp{\RadialProjection\np{\Cone_2}} \cap \na{0} = \emptyset$ (see the representation of 
$\closedconvexhull\bp{\RadialProjection\np{\Cone_2}}$
in Figure~\ref{fig:co_n_K2_l_infty}).
This example highlights that the characterization~\eqref{eq:capra_convex_sets}
in Corollary~\ref{co:rotund_balls} is not sufficient
to identify a \Capra-convex set when the unit ball
induced by the source norm is not rotund,
as in the case of $\DoubleNorm{\cdot} = \Norm{\cdot}_\infty$.

Lastly, as with the source norm $\DoubleNorm{\cdot} = \Norm{\cdot}_2$,
the \Capra-convexity of 
$\Cone_3$ in Figure~\ref{fig:K3_l_infty} 
can be directly deduced
from Corollary~\ref{co:closed_convex_cones},
as $\Cone_3$ is a closed convex pointed cone.
Indeed, we recall that there is no assumption
on the source norm made in Corollary~\ref{co:closed_convex_cones}.

\subsection{The sublevel sets of \Capra-convex functions}
\label{subsec:Capra-convex_epigraphs_and_sublevel_sets_of_functions}

As in usual convexity, Definition~\ref{de:capra_convex_sets} of \Capra-convex
sets implies that the sublevel sets of \Capra-convex functions are \Capra-convex sets. 
Those sublevel sets appear in cardinality-constrained problems~\citep{Tillmann-Bienstock-Lodi-Schwartz:2024} 
through the \Capra-convex function~\(\lzero\) \cite{Chancelier-DeLara:2021_ECAPRA_JCA}.
This is however not the case for the epigraphs of \Capra-convex functions. 
We gather these two results in the following proposition.

We consider the mapping~\(\theta \colon \PRIMAL\times\RR \to \PRIMAL\times\RR\)
given by  \(\theta\np{\primal,t}=\bp{\RadialProjection\np{\primal},t}\),
for any \( \np{\primal,t} \in \PRIMAL\times\RR\). 
Then, introducing the coupling function
\(\star_\theta : \np{\PRIMAL\times\RR} \times \np{\PRIMAL\times\RR} \to \RR \)
given by \(\star_\theta \bp{\np{\primal, t}, \np{\dual, s}} 
= \proscal{\theta\np{\primal,t}}{\np{\dual, s}}\), for any
\(\bp{\np{\primal, t}, \np{\dual, s}} \in \np{\PRIMAL\times\RR} \times \np{\PRIMAL\times\RR}\).
We say that a set~\(\SubSet \subset \PRIMAL\times\RR \) is \emph{\(\star_\theta\)-convex} if \(
\Indicator{\SubSet} = \SFMbi{\Indicator{\SubSet}}{\star_\theta} \). 

\begin{proposition}
	\label{pr:sublevel_sets_epigraph_capra_convex_function}
	Let $\DoubleNorm{\cdot}$ be a source norm, and
	$\CouplingCapra$ be the corresponding \Capra-coupling
	as in Definition~\ref{de:Capra}. 
	Let \(\fonctionprimal \colon \PRIMAL \to \barRR\) be a \Capra-convex function, as in Definition~\ref{de:capra_convex_functions}.
	Then 
	\begin{enumerate}
		\item[(i)] 
		the sublevel sets~\(
		\fonctionprimal^{\leq t} = \defset{\primal \in \PRIMAL}{\fonctionprimal\np{\primal} \leq t}\)
		are \Capra-convex sets, for any \(t\in \RR\);
		
		\item[(ii)] the epigraph~\(\epigraph \fonctionprimal =
		\defset{\np{\primal,t}\in \PRIMAL \times \RR}{\fonctionprimal\np{\primal}
			\leq t}\) is a \(\star_\theta\)-convex set. 
	\end{enumerate}
\end{proposition}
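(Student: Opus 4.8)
The common engine for both items is the factorization of a \Capra-convex function through the radial projection, which I would establish exactly as in the proof of Proposition~\ref{pr:cone}. Since the coupling~$\CouplingCapra$ is one-sided linear and satisfies $\CouplingCapra\np{\primal,\dual}=\proscal{\RadialProjection\np{\primal}}{\dual}$, the \Capra-convexity of~$\fonctionprimal$ yields
\[
\fonctionprimal=\SFMbi{\fonctionprimal}{\CouplingCapra}=h\circ\RadialProjection
\eqsepv h=\SFM{\fonctionprimal}{\CouplingCapra\star'}
\eqfinp
\]
Here $h$ is the Fenchel conjugate of $\SFM{\fonctionprimal}{\CouplingCapra}$, hence a closed convex function on~$\PRIMAL$; in particular every sublevel set $D=\defset{\primal\in\PRIMAL}{h\np{\primal}\leq t}$ and the epigraph $\epigraph h$ are closed convex sets. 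I would use this single structural fact to treat both items.

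For item~$(i)$, the plan is to invoke Theorem~\ref{th:set_inter_spherezero}. Because $\fonctionprimal=h\circ\RadialProjection$ is $0$-homogeneous, the sublevel set $\fonctionprimal^{\leq t}$ is a cone. Using that $\RadialProjection$ fixes $\SPHEREzero$ pointwise, I would show $\RadialProjection\bp{\fonctionprimal^{\leq t}}=D\cap\SPHEREzero$. It then remains to check the identity $D\cap\SPHEREzero=\closedconvexhull\bp{D\cap\SPHEREzero}\cap\SPHEREzero$ demanded by Theorem~\ref{th:set_inter_spherezero}: the inclusion $\supseteq$ follows from $\closedconvexhull\bp{D\cap\SPHEREzero}\subseteq\closedconvexhull\np{D}=D$, valid precisely because $D$ is closed convex, and $\subseteq$ is immediate. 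This gives the \Capra-convexity of $\fonctionprimal^{\leq t}$.

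For item~$(ii)$, since $\star_\theta$-convexity is not covered by Theorem~\ref{th:set_inter_spherezero}, I would first derive its own characterization from the factorization $\star_\theta\bp{\np{\primal,t},\np{\dual,s}}=\proscal{\theta\np{\primal,t}}{\np{\dual,s}}$, where $\theta$ is idempotent with image $\SPHEREzero\times\RR$. Repeating the computation above gives, for any $\SubSet\subseteq\PRIMAL\times\RR$, that $\SFM{\Indicator{\SubSet}}{\star_\theta}=\sigma_{\theta\np{\SubSet}}$, and therefore, by the standard identity that the Fenchel conjugate of a support function is the indicator of the closed convex hull, $\SFMbi{\Indicator{\SubSet}}{\star_\theta}=\Indicator{\closedconvexhull\np{\theta\np{\SubSet}}}\circ\theta$. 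Hence $\SubSet$ is $\star_\theta$-convex if and only if $\SubSet=\Converse{\theta}\bp{\closedconvexhull\np{\theta\np{\SubSet}}}$. Applying this to $\SubSet=\epigraph\fonctionprimal$, I would compute $\theta\np{\epigraph\fonctionprimal}=\epigraph h\cap\np{\SPHEREzero\times\RR}$ and then, as in item~$(i)$, use that $\epigraph h$ is closed convex to conclude $\Converse{\theta}\bp{\closedconvexhull\np{\theta\np{\epigraph\fonctionprimal}}}=\Converse{\theta}\np{\epigraph h}=\epigraph\fonctionprimal$.

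The hard part, shared by both items, is the commutation step that replaces a closed convex set ($D$, resp.\ $\epigraph h$) by the closed convex hull of its slice by $\SPHEREzero$ (resp.\ by $\SPHEREzero\times\RR$): this is exactly where the argument uses that $h$ is a genuine Fenchel conjugate, through $\closedconvexhull\bp{D\cap\SPHEREzero}\subseteq D$. A secondary task is setting up the characterization of $\star_\theta$-convexity from scratch and checking that all set identities persist in the degenerate cases (empty sublevel sets, or $\fonctionprimal\equiv+\infty$ giving $\epigraph\fonctionprimal=\emptyset$), which they do under the usual conjugacy conventions.
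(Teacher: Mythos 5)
Your proof is correct, and it reaches the conclusion by a different route than the paper. Both arguments start from the same structural fact --- a \Capra-convex function factorizes as \(\fonctionprimal = h \circ \RadialProjection\) with \(h\) a closed convex function (you obtain \(h=\SFM{\fonctionprimal}{\CouplingCapra\star'}\) from the biconjugate formula; the paper invokes the factorization result \cite[Proposition~2.6]{Chancelier-DeLara:2021_ECAPRA_JCA} directly). Where you diverge is in the concluding step: the paper writes \(\Indicator{\fonctionprimal^{\leq t}} = \Indicator{\BigFonctionun^{\leq t}}\circ\RadialProjection\) and \(\Indicator{\epigraph \fonctionprimal} = \Indicator{\epigraph \BigFonctionun}\circ\theta\) and then cites the \emph{reverse} implication of that same external proposition (a function of the form ``proper \lsc\ convex composed with \(\RadialProjection\), resp.\ \(\theta\)'' is \Capra-convex, resp.\ \(\star_\theta\)-convex) to finish in two lines. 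You instead verify the set-level characterizations internally: for item~\((i)\) you check the hypothesis of Theorem~\ref{th:set_inter_spherezero} via the inclusion \(\closedconvexhull\bp{D\cap\SPHEREzero}\subseteq\closedconvexhull\np{D}=D\), and for item~\((ii)\) you rebuild the analogue of Lemma~\ref{le:capra_convex_set} for the coupling \(\star_\theta\) from the identity \(\SFM{\Indicator{\SubSet}}{\star_\theta}=\sigma_{\theta\np{\SubSet}}\). Your version is longer but more self-contained (it relies only on results proved in this paper plus standard Fenchel conjugacy facts), and it sidesteps the properness requirement on \(\BigFonctionun\) that the paper's citation carries, since you only need closedness and convexity of the sublevel sets and epigraph of \(h\); the paper's version is shorter and makes the parallelism between \((i)\) and \((ii)\) immediate, at the price of leaning twice on the external biconditional. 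All the individual steps you outline (the computation \(\RadialProjection\bp{\fonctionprimal^{\leq t}}=D\cap\SPHEREzero\), the identity \(\theta\np{\epigraph\fonctionprimal}=\epigraph h\cap\np{\SPHEREzero\times\RR}\), and the two commutation inclusions) check out, including in the degenerate cases.
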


\begin{proof}
	According to~\cite[Proposition~2.6]{Chancelier-DeLara:2021_ECAPRA_JCA}, there
	exists a proper \lsc\ convex function~\(\BigFonctionun \colon \PRIMAL \to \barRR\)
	such that \(\fonctionprimal = \BigFonctionun\circ \RadialProjection\).  Thus,
	\(\fonctionprimal\np{\primal} \leq t \iff \BigFonctionun\bp{\RadialProjection\np{\primal}} \leq
	t\), for any \(\primal \in \PRIMAL\) and \(t \in \RR\).
	
	Thus, elementary calculus rules on indicators functions yield that
	\begin{align*}
	\Indicator{\fonctionprimal^{\leq t}} &= 
	\Indicator{\RadialProjection^{-1}\np{\BigFonctionun^{\leq t}}}
	= \Indicator{\BigFonctionun^{\leq t}} \circ \RadialProjection
	\eqsepv \forall t \in \RR \eqfinv
	\\
	\Indicator{\epigraph \fonctionprimal} 
	&= \Indicator{\theta^{-1}\np{\epigraph \BigFonctionun}} 
	= \Indicator{\epigraph \BigFonctionun} \circ \theta
	\eqfinp
	\end{align*}
	Applying the reverse implication
	in~\cite[Proposition~2.6]{Chancelier-DeLara:2021_ECAPRA_JCA} ---
	that a set~\(\SubSet \subset \PRIMAL\times\RR \) is {\(\star_\theta\)-convex} if \(
	\Indicator{\SubSet} = \BigFonctiondeux \circ \theta\),
	where \(\BigFonctiondeux \colon \PRIMAL \to \barRR\) is a proper \lsc\ convex function --- 
	we conclude that
	\(\Indicator{\fonctionprimal^{\leq t}}\) is \Capra-convex, for any
	\(t \in \RR\). This proves~\((i)\).  Using the same argument,
	\(\Indicator{\epigraph \fonctionprimal}\) is a \(\star_\theta\)-convex set, as
	\(\Indicator{\epigraph \BigFonctionun}\) is a proper \lsc\ convex
	function. This proves~\((ii)\).
\end{proof}

\begin{remark}
	For a given source norm~\(\DoubleNorm{\cdot}\) and its associated radial
	projection~\(\RadialProjection \colon \PRIMAL \to \SPHEREzero\), there is no hope to
	rewrite the mapping~\(\theta \colon \PRIMAL \times \RR \to \PRIMAL \times \RR\) defined by
	\(\theta\np{\primal,t} = \bp{\RadialProjection\np{\primal},t}\) into the radial projection of
	an other norm.  Indeed, the mapping~\(\theta\) is not \(0\)-homogeneous, while
	radial projections are.
\end{remark}

\section{Conclusion}

We have given a definition for \Capra-convex sets,
together with results on the characterization of such sets, which are cones that need not be either convex nor closed.
Interestingly, we have obtained that the \Capra-convexity
of a set depends on the geometry of the unit ball 
induced by the source norm used to define the \Capra-coupling.
To complete these results, we have provided conditions for the conical hull of a set to be \Capra-convex.
These conditions allow to characterize sparse optimization problems
which could be addressed in the framework of \Capra-convexity.
In particular, we have showed that the conical hull of a closed spherically-convex set is \Capra-convex.
Lastly, we have provided several examples to illustrate the specificity of \Capra-convex sets.

\appendix
\section{Appendix: proofs}
\label{sec:proofs} 


\subsection{Generic lemmas}

We show a first characterization of \Capra-convex sets.

\begin{lemma}
	\label{le:capra_convex_set}%
	Let $\DoubleNorm{\cdot}$ be a source norm on $\PRIMAL$,
	$\CouplingCapra$ be the corresponding \Capra-coupling
	in~\eqref{eq:coupling_CAPRA},
	and $\RadialProjection$
	be the corresponding 
	radial projection in~\eqref{eq:normalization_mapping}.
	Let $\SubSet \subseteq \PRIMAL$ be a set.
	The following assertions are equivalent:
	\begin{itemize}
		\item[$(i)$] $\SubSet$  is \Capra-convex, 
		\item[$(ii)$]
		$\Indicator\SubSet
		=\Indicator{ \closedconvexhull\np{\RadialProjection\np{\SubSet}}} 
		\circ \RadialProjection$,  
		\item[$(iii)$] $\SubSet = \Converse{\RadialProjection}
		\Bp{\closedconvexhull\bp{\RadialProjection\np{\SubSet}}}$. 
	\end{itemize}
\end{lemma}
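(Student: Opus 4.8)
The plan is to reduce all three assertions to a single explicit computation of the \Capra-biconjugate of $\Indicator\SubSet$, reading off the equivalences at the end. First I would compute the \Capra-conjugate: using the definition~\eqref{eq:Fenchel-Moreau_conjugate} together with the form~\eqref{eq:coupling_CAPRA} of the coupling and the definition~\eqref{eq:support_function} of the support function,
\[
  \SFM{\Indicator\SubSet}{\CouplingCapra}(\dual)
  = \sup_{\primal \in \SubSet} \proscal{\RadialProjection(\primal)}{\dual}
  = \sup_{\sphere \in \RadialProjection(\SubSet)} \proscal{\sphere}{\dual}
  = \sigma_{\RadialProjection(\SubSet)}(\dual)
  \eqsepv \forall \dual \in \DUAL \eqfinv
\]
so the \Capra-conjugate of an indicator is exactly the support function of the radially projected set (the middle equality holds because $\RadialProjection(\primal)$ ranges over $\RadialProjection(\SubSet)$ as $\primal$ ranges over $\SubSet$).

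Next I would feed this into the definition~\eqref{eq:Fenchel-Moreau_biconjugate} of the biconjugate, obtaining
\[
  \SFMbi{\Indicator\SubSet}{\CouplingCapra}(\primal)
  = \sup_{\dual \in \DUAL} \Bp{\proscal{\RadialProjection(\primal)}{\dual} - \sigma_{\RadialProjection(\SubSet)}(\dual)}
  = \bp{\sigma_{\RadialProjection(\SubSet)}}^\star\bp{\RadialProjection(\primal)} \eqfinp
\]
Here I would invoke the standard fact of convex analysis that the Fenchel conjugate of a support function is the indicator of the closed convex hull, that is $\bp{\sigma_A}^\star = \Indicator{\closedconvexhull(A)}$, to conclude
\[
  \SFMbi{\Indicator\SubSet}{\CouplingCapra}
  = \Indicator{\closedconvexhull(\RadialProjection(\SubSet))} \circ \RadialProjection \eqfinp
\]

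From this the equivalences follow at once. By Definition~\ref{de:capra_convex_functions}, assertion $(i)$ states $\Indicator\SubSet = \SFMbi{\Indicator\SubSet}{\CouplingCapra}$, which by the last display is precisely assertion $(ii)$, giving $(i) \iff (ii)$. For $(ii) \iff (iii)$, I would use the elementary identity $\Indicator{A} \circ \RadialProjection = \Indicator{\Converse{\RadialProjection}(A)}$ for the preimage, so that $(ii)$ rewrites as $\Indicator\SubSet = \Indicator{\Converse{\RadialProjection}(\closedconvexhull(\RadialProjection(\SubSet)))}$; since two indicator functions coincide if and only if the underlying sets coincide, this is equivalent to $(iii)$.

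I do not expect a substantial obstacle, as the argument is an algebraic unwinding of conjugacy rather than anything geometric. The only points needing care are the degenerate case $\SubSet = \emptyset$ (where $\sigma_\emptyset \equiv -\infty$ and one checks, with the usual conventions, that the biconjugate equals $+\infty = \Indicator\emptyset$, so the identities persist) and the justification of the support-function conjugacy fact, which is legitimate because $\closedconvexhull(\RadialProjection(\SubSet))$ is closed and convex. This lemma is the algebraic core from which the geometric characterization of Theorem~\ref{th:set_inter_spherezero} will subsequently be extracted.
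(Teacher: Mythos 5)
Your proposal is correct and follows essentially the same route as the paper: both compute the \Capra-conjugate of $\Indicator\SubSet$ as the support function $\sigma_{\RadialProjection(\SubSet)}$, identify the biconjugate as $\Indicator{\closedconvexhull(\RadialProjection(\SubSet))}\circ\RadialProjection$ via standard Fenchel conjugacy of support functions, and then read off $(i)\iff(ii)$ from the definitions and $(ii)\iff(iii)$ from the preimage identity for indicators. The only cosmetic difference is that the paper routes the factorization through its cited one-sided-linear-coupling results rather than unwinding the suprema directly, and your explicit treatment of the $\SubSet=\emptyset$ case is a welcome extra check.
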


\begin{proof}
	To begin with, we show the following preliminary result:
	$\SFM{\Indicator\SubSet}{\CouplingCapra\CouplingCapra'}
	=\Indicator{ \closedconvexhull\bp{\RadialProjection\np{\SubSet}}}
	\circ \RadialProjection$.
	The coupling $\CouplingCapra$ in Definition~\ref{de:Capra}
	is one-sided linear (see~\cite[Definition~2.3]{Chancelier-DeLara:2021_ECAPRA_JCA}) and factorizes 
	with the radial projection $\RadialProjection \colon \PRIMAL \to \SPHEREzero$ in~\eqref{eq:normalization_mapping}.
	Therefore, we get that
	\begin{align*}
	\SFM{ \Indicator\SubSet }{\CouplingCapra\CouplingCapra'}
	&=
	\SFM{ \Indicator\SubSet }{\CouplingCapra\star'} \circ \RadialProjection
	\eqfinv
	\tag{as $\CouplingCapra$ is a one-sided linear coupling, see~\cite[Proposition 2.5]{Chancelier-DeLara:2021_ECAPRA_JCA}}
	\\
	&=
	\SFM{ \bp{\sigma_{ \RadialProjection\np{\SubSet}}} }{\star'} \circ \RadialProjection
	\eqfinv
	\tag{from $\SFM{ \Indicator\SubSet }{\CouplingCapra} = \sigma_{ \RadialProjection\np{\SubSet} }$, see~\cite[Proposition 2.5]{Chancelier-DeLara:2021_ECAPRA_JCA}}
	\\
	&=
	\SFM{ \bp{\sigma_{ \closedconvexhull\np{\RadialProjection\np{\SubSet}}}} }{\star'} \circ \RadialProjection
	\eqfinv
	\tag{see e.g.~\cite[Proposition 7.13]{Bauschke-Combettes:2017}}
	\\
	&=
	\SFM{ \bp{\Indicator{ \closedconvexhull\np{\RadialProjection\np{\SubSet}}}} }{\star\star'} \circ \RadialProjection
	\eqfinv
	\tag{see e.g.~\cite[Example 13.3]{Bauschke-Combettes:2017}}
	\\
	&=
	\Indicator{ \closedconvexhull\np{\RadialProjection\np{\SubSet}}} \circ \RadialProjection
	\eqfinp
	\tag{as $\closedconvexhull\np{\RadialProjection\np{\SubSet}}$ is closed and convex}
	\end{align*}
	
	Then, from the characterization of
	\Capra-convex functions in Definition~\ref{de:capra_convex_functions}
	and	\Capra-convex sets in Definition~\ref{de:capra_convex_sets},
	we obtain that the set $\SubSet$ is \Capra-convex \IFF
	$\Indicator\SubSet
	=\SFM{\Indicator\SubSet}{\CouplingCapra\CouplingCapra'}
	=\Indicator{ \closedconvexhull\np{\RadialProjection\np{\SubSet}}} 
	\circ \RadialProjection$.
	This proves $(i) \iff (ii)$. 
	The equivalence between $(ii)$ and $(iii)$
	is immediate as \( \Indicator{ \closedconvexhull\np{\RadialProjection\np{\SubSet}}} 
	\circ \RadialProjection = \Indicator{ \Converse{\RadialProjection}\bp{
\closedconvexhull\np{\RadialProjection\np{\SubSet}}}} \). 
\end{proof}

Additionally, we recall the following lemma, which is given as an exercise in~\cite[Exercise 3.48(a)]{Rockafellar-Wets:1998}.

\begin{lemma}
  \label{le:closeness}
	Let $\SubSet \subseteq \PRIMAL$ be a compact set such that $0 \notin \SubSet$.
	The set $\conicalhull(\SubSet) \cup \na{0}$ is closed.
\end{lemma}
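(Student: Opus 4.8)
The plan is to prove that $\conicalhull(\SubSet) \cup \na{0}$ is closed by establishing sequential closedness, which suffices since $\PRIMAL$ is finite-dimensional. Using~\eqref{eq:positivehull}, I would first rewrite the set as $\conicalhull(\SubSet) \cup \na{0} = \positivehull(\SubSet) = \defset{\lambda\primal}{\primal\in\SubSet,\ \lambda\geq 0}$ (the case $\SubSet = \emptyset$ being trivial, as then the set reduces to $\na{0}$). I would then take an arbitrary sequence $\sequence{\primalbis_k}{k\in\NN}$ in $\positivehull(\SubSet)$ converging to some limit $\primalbis \in \PRIMAL$, and aim to show that $\primalbis \in \positivehull(\SubSet)$. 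By definition of the positive hull, each term decomposes as $\primalbis_k = \lambda_k \primal_k$ with $\lambda_k \geq 0$ and $\primal_k \in \SubSet$.

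The crux of the argument is to bound the multiplier sequence $\sequence{\lambda_k}{k}$, and this is exactly where both hypotheses enter. Since $\SubSet$ is compact and $0 \notin \SubSet$, the continuous map $\primal \mapsto \DoubleNorm{\primal}$ attains on $\SubSet$ a minimum $m = \min_{\primal\in\SubSet} \DoubleNorm{\primal}$, which is strictly positive precisely because $0\notin\SubSet$. From $\DoubleNorm{\primalbis_k} = \lambda_k \DoubleNorm{\primal_k} \geq \lambda_k m$ I would deduce $\lambda_k \leq \DoubleNorm{\primalbis_k}/m$; as the convergent sequence $\sequence{\primalbis_k}{k}$ has bounded norms, this shows that $\sequence{\lambda_k}{k}$ is a bounded nonnegative sequence. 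By compactness of $\SubSet$ together with boundedness of $\sequence{\lambda_k}{k}$, a Bolzano--Weierstrass extraction yields a common subsequence along which $\primal_k \to \primal \in \SubSet$ and $\lambda_k \to \lambda \geq 0$. Passing to the limit in $\primalbis_k = \lambda_k \primal_k$ gives $\primalbis = \lambda\primal \in \positivehull(\SubSet)$, which closes the argument.

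The only delicate point is the boundedness of the multipliers, and it is genuinely where the assumption $0\notin\SubSet$ is indispensable: if $0$ were a limit point of $\SubSet$, the norms $\DoubleNorm{\primal_k}$ could tend to $0$ while the $\lambda_k$ blow up, so that $\primalbis_k = \lambda_k \primal_k$ stays bounded yet its limit escapes the positive hull --- this is exactly the failure exhibited by the disk through the origin in the example preceding this statement. Everything else reduces to the routine extraction of convergent subsequences and passage to the limit in finite dimension.
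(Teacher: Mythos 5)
Your proof is correct. The paper does not actually prove this lemma---it only quotes it as \cite[Exercise 3.48(a)]{Rockafellar-Wets:1998}---and your argument (rewriting the set as $\positivehull(\SubSet)$, bounding the multipliers via $\lambda_k \le \DoubleNorm{\primalbis_k}/m$ with $m=\min_{\primal\in\SubSet}\DoubleNorm{\primal}>0$, which is where compactness and $0\notin\SubSet$ both enter, and then extracting convergent subsequences) is precisely the standard solution of that exercise, with the empty-set edge case correctly set aside.
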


\subsection{Proof of Theorem~\ref{th:set_inter_spherezero}}

\begin{proof}
 %
	We show the equivalence in~\eqref{eq:set_inter_spherezero} as two reverse implications.
  
  $(\implies)$ Let us assume that $\Cone$ is \Capra-convex. 
  We notice that, following Proposition~\ref{pr:cone},
  the set $\Cone$ is a cone. 
  Then, we have that
        \begin{align*}
          \Cone
          &=
            \Converse{\RadialProjection}
            \Bp{\closedconvexhull\bp{\RadialProjection\np{\Cone}}}
            \eqfinv
            \tag{by Item~(iii) in Lemma~\ref{le:capra_convex_set}}
          \\
		&=          
   \conicalhull\Bp{ \closedconvexhull\bp{\RadialProjection\np{\Cone}} \cap \SPHEREzero} \eqfinv
\tag{from~\eqref{eq:RadialProjection_inverse}}
               \intertext{from which we deduce that}
               \RadialProjection\np{\Cone}
          &= \conicalhull\Bp{ \closedconvexhull\bp{\RadialProjection\np{\Cone}} \cap \SPHEREzero} \cap \SPHEREzero
            =\closedconvexhull\bp{\RadialProjection\np{\Cone}} \cap \SPHEREzero
            \eqfinp
            \tag{from~\eqref{eq:cone_inter_spherezero}, as $\Cone$ is a cone}
        \end{align*}

	$(\impliedby)$ Let us assume that $\Cone$ is a cone and
        $\RadialProjection\np{\Cone} = \closedconvexhull\bp{\RadialProjection\np{\Cone}} \cap \SPHEREzero$.
	We have that
        \begin{align*}
          \Cone
          &=
            \Converse{\RadialProjection}\bp{\RadialProjection\np{\Cone}}
            \tag{by definition~\eqref{eq:normalization_mapping} 
            of the radial projection~$\RadialProjection$, and as $\Cone$ is a cone}
          \\
         &=
           \Converse{\RadialProjection}\Bp{
\closedconvexhull\bp{\RadialProjection\np{\Cone}} \cap \SPHEREzero }           
           \tag{by assumption}
           \\
         &=
           \Converse{\RadialProjection}\Bp{
\closedconvexhull\bp{\RadialProjection\np{\Cone}}}
    \tag{from~\eqref{eq:RadialProjection_inverse}}
           \eqfinp
        \end{align*}
        We conclude that that $\Cone$ is \Capra-convex
        by Item~(iii) in Lemma~\ref{le:capra_convex_set}.
\end{proof}

\subsection{Proof of Corollary~\ref{co:implication}}

\begin{proof}
	Let us suppose that
	the set $\Cone$ is \Capra-convex, and prove
	the implication $\np{\implies}$ in~\eqref{eq:capra_convex_sets}.
	
	$\bullet$ As shown in Proposition~\ref{pr:cone},
	$\Cone$ is a cone.
	
	$\bullet$ 
	Since $\Cone$ is a cone,
	we have that
	$\Cone \cup \na{0} = \conicalhull\np{\Cone\cap\SPHERE} \cup \na{0}$. Then, from Theorem~\ref{th:set_inter_spherezero},
	as $\Cone$ is \Capra-convex,
	we obtain
	$\Cone \cup \na{0} = \conicalhull\bp{\closedconvexhull\np{\Cone\cap\SPHEREzero} \cap \SPHERE} \cup \na{0}$.
	Lastly, as $\closedconvexhull\np{\Cone\cap\SPHEREzero} \cap \SPHERE$
	is a compact set which does not contain $0$,
	we conclude from Lemma~\ref{le:closeness}
	that $\Cone \cup \na{0}$ is closed.
	
	$\bullet$ The fact that 
	$\Cone \cap \na{0} = \closedconvexhull\bp{\RadialProjection\np{\Cone}} \cap \na{0}$   
	follows	from
	\begin{align*}
	0 \in \Cone
	&\iff \Indicator\Cone(0) = 0 \eqfinv \\
	&\iff \Indicator{ \closedconvexhull\bp{\RadialProjection\np{\Cone}}}(0) = 0 \eqfinv 
	\tag{from Lemma~\ref{le:capra_convex_set}, as $\RadialProjection(0) = 0$}\\
	&\iff 0 \in \closedconvexhull\bp{\RadialProjection\np{\Cone}}
	\eqfinp
	\end{align*}
	This ends the proof of the implication $\np{\implies}$.
\end{proof}

\subsection{Proof of Corollary~\ref{co:rotund_balls} (rotund norm balls)}

\begin{proof}
	We now prove that if the unit ball $\BALL$ of the source norm is rotund, then the
	reverse implication $\np{\impliedby}$ in~\eqref{eq:capra_convex_sets} holds. 
	
        Let us assume that the set $\Cone$ satisfies the three
        conditions in the right-hand side of the implication
        in~\eqref{eq:capra_convex_sets}, that is, 
        \[
              \begin{cases}
      \Cone \text{ is a cone,} \\
      \Cone \cup \na{0} \text{ is closed} \eqfinv \\
      \Cone \cap \na{0} =
      \closedconvexhull\bp{\RadialProjection\np{\Cone}} \cap \na{0}
      \eqfinp 
    \end{cases}
    \]
        We show that, under these assumptions,
        $\Cone \cap \SPHEREzero = \closedconvexhull\np{\Cone\cap\SPHEREzero} \cap \SPHEREzero$,
        hence that~\eqref{eq:set_inter_spherezero} is satisfied, from~\eqref{eq:cone_inter_spherezero}.
        
        The inclusion 
        $\Cone \cap \SPHEREzero \subseteq \closedconvexhull\np{\Cone\cap\SPHEREzero} \cap \SPHEREzero$
        is straightforward. We thus concentrate on the reverse 	inclusion.
        Let us take $\primal \in \closedconvexhull\np{\Cone\cap\SPHEREzero} \cap \SPHEREzero$.
        We consider two cases.
        
        $\bullet$ Let us assume that $\primal = 0$. 
        We deduce that $0 \in \closedconvexhull\np{\Cone\cap\SPHEREzero}
        =\closedconvexhull\bp{\RadialProjection\np{\Cone}}$, since
        $\Cone$ is a cone (see~\eqref{eq:cone_inter_spherezero}).
        Then, as $\Cone \cap \na{0} = \closedconvexhull\bp{\RadialProjection\np{\Cone}} \cap \na{0}$,
        we conclude that $0 \in \Cone$, and thus that
        $\primal = 0 \in \Cone \cap \SPHEREzero$.
        
        $\bullet$ We now turn to the case $\primal \neq 0$.
        We observe that
        \begin{equation*}
          \Cone \cap \SPHEREzero =
          \begin{cases}
            \bp{\Cone \cup \na{0}} \cap \SPHEREzero \eqsepv 
            &\text{ if } 0 \in \Cone \eqfinv
            \\
            \bp{\Cone \cup \na{0}} \cap \SPHERE \eqsepv 
            &\text{ if } 0 \notin \Cone \eqfinp
          \end{cases}
        \end{equation*}
        Since $\Cone \cup \na{0}$ is closed, we deduce that
        $\Cone \cap \SPHEREzero$ is closed, and thus compact (since included in the unit ball~$\BALL$).
        It follows that the convex hull of $\Cone \cap \SPHEREzero$ is compact
        (see e.g.~\cite[Corollary 2.30]{Rockafellar-Wets:1998}),
        and thus that
        $\closedconvexhull\np{\Cone\cap\SPHEREzero} 
        = \convexhull\np{\Cone\cap\SPHEREzero}$.
        
        
        We deduce that $\primal \in \convexhull\np{\Cone\cap\SPHEREzero}$,
        and therefore, from Carath\'eodory's Theorem (see e.g.~\cite[Theorem 2.29]{Rockafellar-Wets:1998}),
        that there exists $\na{\alpha_i}_{i\in\ic{1,{n}+1}} \in \nc{0,1}^{{n}+1}$,
        $\na{\primal_i}_{i\in\ic{1,{n}+1}} \in \np{\Cone\cap\SPHEREzero}^{{n}+1}$
        such that
        \begin{equation*}
          \primal = \sum_{i=1}^{{n}+1} \alpha_i\primal_i
          \eqsepv
          \text{and}
          \enspace
          \sum_{i=1}^{{n}+1} \alpha_i = 1 \eqfinp
        \end{equation*}
        Now, we observe that if for some
        $j \in \ic{1,{n}+1}$ we have $x_j = 0$ and $\alpha_j > 0$,
        as $x \in \SPHERE$,
        it implies that
        \begin{equation*}
          1 = \DoubleNorm{\primal}
          \leq 
          \sum_{i=1}^{{n}+1} \alpha_i\DoubleNorm{\primal_i}
          \leq \sum_{i\neq j, i=1}^{{n}+1} \alpha_i
          < 1 \eqfinv
        \end{equation*}
        which leads to a contradiction.
        We deduce that 
        $\na{\primal_i}_{i\in\ic{1,{n}+1}} \in \np{\Cone\cap\SPHERE}^{{n}+1}$,
        and thus that $x \in \convexhull\np{\Cone\cap\SPHERE}$.
        Finally, from~\cite[Corollary 16]{Chancelier-DeLara:2022_OSM_JCA},
        since the unit ball induced by $\DoubleNorm{\cdot}$ is rotund,
        we have that 
        $\convexhull\np{\Cone\cap\SPHERE} \cap \SPHERE = \Cone\cap\SPHERE$,
        so that $\primal \in \Cone\cap\SPHERE$.
        
        We conclude that, in both cases, $\primal \in \Cone\cap\SPHEREzero$,
        which proves that
        $\Cone \cap \SPHEREzero \supseteq \closedconvexhull\np{\Cone\cap\SPHEREzero} \cap \SPHEREzero$,
        and therefore that $\Cone \cap \SPHEREzero = \closedconvexhull\np{\Cone\cap\SPHEREzero} \cap \SPHEREzero$.
        
        Therefore, \eqref{eq:set_inter_spherezero} holds and 
        we deduce from Theorem~\ref{th:set_inter_spherezero} that
        the cone $\Cone$ is a \Capra-convex set.	
      \end{proof}

\subsection{Proof of Corollary~\ref{co:closed_convex_cones}(closed convex cones)}

\begin{proof}
	Let $\Cone \subseteq \PRIMAL$ be a closed convex cone.
	
	First, we prove~\((i)\)
	by showing that 
	$\closedconvexhull\np{\Cone\cap\SPHEREzero} = 
	\Cone\cap\BALL$.
	As $\Cone\cap\SPHEREzero \subset \Cone\cap\BALL$, we have that
	$\closedconvexhull\np{\Cone\cap\SPHEREzero} \subseteq \closedconvexhull\np{\Cone\cap\BALL}$.
	It follows that 
	$\closedconvexhull\np{\Cone\cap\SPHEREzero} \subseteq 
	\Cone\cap\BALL$ from the fact that $\closedconvexhull\np{\Cone\cap\BALL}=\Cone\cap\BALL$, since
	${\Cone\cap\BALL}$ is closed convex.
	
	To prove the reverse inclusion, 
	let us consider $\primal \in \Cone\cap\BALL$.
	By definition of the radial projection
	$\RadialProjection$ in~\eqref{eq:normalization_mapping},
	we have that 
	$\primal = \DoubleNorm{x} \RadialProjection(x) + (1 - \DoubleNorm{x}) 0$
	with $\na{\RadialProjection(x), 0} \subseteq \Cone\cap\SPHEREzero$
	as $\primal \in \Cone$ and $0 \in \Cone$,
	and $\DoubleNorm{x} \leq 1$, as $\primal \in \BALL$.
	We deduce that $x \in \convexhull\np{\Cone\cap\SPHEREzero}$,
	which proves the reverse inclusion
	$\closedconvexhull\np{\Cone\cap\SPHEREzero} \supseteq 
	\Cone\cap\BALL$.
	
	Now, we observe that
	$\closedconvexhull\np{\Cone\cap\SPHEREzero} \cap \SPHEREzero
	=\Cone\cap\BALL\cap\SPHEREzero = \Cone\cap\SPHEREzero$ which gives
	 that $\RadialProjection\np{\Cone}= \closedconvexhull\np{\RadialProjection\np{\Cone}} \cap \SPHEREzero$,
	    from~\eqref{eq:cone_inter_spherezero}, as $\Cone$ is a cone.
	    We obtain that $\Cone$ is \Capra-convex, from Theorem~\ref{th:set_inter_spherezero}.
	
	\medbreak
	
	Second, we assume that, moreover, the cone $\Cone$ is pointed, and we prove~\((ii)\).
	To proceed, we introduce the notation $\Cone' = \Cone \setminus \na{0}$ 
	and we show that $\Cone' \cap \SPHEREzero = \closedconvexhull\np{\Cone'\cap\SPHEREzero} \cap \SPHEREzero$.
	The inclusion $\Cone' \cap \SPHEREzero \subseteq \closedconvexhull\np{\Cone'\cap\SPHEREzero} \cap \SPHEREzero$
	is straightforward. 
	We thus concentrate on the reverse
	inclusion. 
	
	Let us take $\primal \in \closedconvexhull\np{\Cone'\cap\SPHEREzero} \cap \SPHEREzero$.
	As $\Cone'\cap\SPHEREzero = \Cone\cap\SPHERE$
	is a compact set, 
	so is its convex hull 
	(see e.g.~\cite[Corollary 2.30]{Rockafellar-Wets:1998}),
	and thus
	$\closedconvexhull\np{\Cone'\cap\SPHEREzero} 
	= \convexhull\np{\Cone\cap\SPHERE}$.
	We therefore have $\primal \in \convexhull\np{\Cone\cap\SPHERE} \cap \SPHEREzero
	\subset \convexhull\np{\Cone} \cap \SPHEREzero = \Cone \cap \SPHEREzero$, as $\Cone$ is convex.
	
	We now assume that $\primal = 0$, and show that
	it leads to a contradiction.
	As $x \in \convexhull\np{\Cone\cap\SPHERE}$,  
	we obtain , from Carath\'eodory's Theorem 
	(see e.g.~\cite[Theorem 2.29]{Rockafellar-Wets:1998}),
	that there exists $\na{\alpha_i}_{i\in\ic{1,{n}+1}} \in \nc{0,1}^{{n}+1}$,
	$\na{\primal_i}_{i\in\ic{1,{n}+1}} \in \np{\Cone\cap\SPHERE}^{{n}+1}$
	such that
	\begin{equation*}
	0 = \primal = \sum_{i=1}^{{n}+1} \alpha_i\primal_i
	\eqsepv
	\text{and}
	\enspace
	\sum_{i=1}^{{n}+1} \alpha_i = 1 \eqfinp
	\end{equation*}
	Thus, there necessarily exist $j\in\ic{1,{n}+1}$ such that $\alpha_j \neq 0$.
	By construction, $\primal_j \in \Cone$
	and 
	\begin{equation*}
	-\primal_j = \sum_{i\in\ic{1,{n}+1}, i\neq j} 
	\frac{\alpha_i}{\alpha_j}\primal_i
	\eqfinp
	\end{equation*}
	As $\Cone$ is a convex cone, we deduce 
	from the above expression
	that $-\primal_j \in \Cone$ (see e.g.~\cite[Proposition 6.3(i)]{Bauschke-Combettes:2017}).
	Then, since $\Cone$ is pointed, necessarily $\primal_j = 0$,
	which contradicts the fact that $\primal_j \in \SPHERE$.
	We conclude that $\primal \neq 0$, and therefore
	that $\primal \in \Cone \cap \SPHERE = \Cone' \cap \SPHEREzero$.
	
	This proves the reverse inclusion
	$\Cone' \cap \SPHEREzero \supseteq \closedconvexhull\np{\Cone'\cap\SPHEREzero} \cap \SPHEREzero$.
        Finally, we have obtained that
        $\Cone' \cap \SPHEREzero = \closedconvexhull\np{\Cone'\cap\SPHEREzero} \cap \SPHEREzero$ and
        the conclusion follows \eqref{eq:cone_inter_spherezero},
        as
        $\Cone'$ is a cone. This concludes the proof.
      \end{proof}

\subsection{Proof of Proposition~\ref{pr:coneX}}

\begin{proof}
  Let $\SubSet \subseteq \PRIMAL$
  be a compact set such that $0 \notin \convexhull(\SubSet)$,
  hence $0 \notin \SubSet$. 
  Following Lemma~\ref{le:closeness},
  the set $\conicalhull(\SubSet) \cup \na{0}$ is closed.
  \begin{itemize}
  \item Let us assume \((i)\).
		
    We start by proving that
    $0 \not\in \closedconvexhull\bp{\RadialProjection\np{\conicalhull(\SubSet)}}$
    by contradiction.  Assume that
    $0 \in \closedconvexhull\bp{\RadialProjection\np{\conicalhull(\SubSet)}}$.
    As $0 \notin \SubSet$ by assumption, and by definition~\eqref{eq:conicalhull} of the conical hull,
    	we have that $0\not\in \conicalhull(\SubSet)$, so that,
    	using~\eqref{eq:cone_inter_spherezero}, we deduce that  
    $0 \in \closedconvexhull\bp{\conicalhull(\SubSet) \cap \SPHERE}$ and thus
    $0 \in \convexhull\bp{\conicalhull(\SubSet) \cap \SPHERE}$ since
    $\conicalhull(\SubSet) \cap \SPHERE$ is compact (see e.g.~\cite[Corollary
    2.30]{Rockafellar-Wets:1998}).
    We obtain from Carath\'eodory's Theorem (see e.g.~\cite[Theorem
    2.29]{Rockafellar-Wets:1998}) that there exists
    $\na{\alpha_i}_{i\in\ic{1,{n}+1}} \in \nc{0,1}^{{n}+1}$,
    $\na{\primal_i}_{i\in\ic{1,{n}+1}} \in \bp{\conicalhull(\SubSet)\cap\SPHERE}^{{n}+1}$
    such that
    \begin{equation*}
      0 = \sum_{i=1}^{{n}+1} \alpha_i\primal_i
      \eqsepv
      \text{ and }
      \enspace
      \sum_{i=1}^{{n}+1} \alpha_i = 1 \eqfinp
    \end{equation*}
    By definition of the conical hull in~\eqref{eq:conicalhull},
    there exists 
    $\na{\primal'_i}_{i\in\ic{1,{n}+1}} \in \SubSet^{n+1}$
    such that $\primal_i = \RadialProjection(\primal'_i)$
    for $i \in \ic{1,n+1}$.
    Thus, introducing 
    $\theta = \sum_{i=1}^{{n}+1} \alpha_i / \Norm{\primal'_i} > 0$,
    we have
    \begin{equation*}
      0 = \sum_{i=1}^{{n}+1} 
      \frac{\alpha_i}{\theta\Norm{\primal'_i}} \primal'_i
      \eqsepv
      \text{ and }
      \enspace
      \sum_{i=1}^{{n}+1} \frac{\alpha_i}{\theta\Norm{\primal'_i}} = 1 \eqfinv
    \end{equation*}
    which implies that $0 \in \convexhull(\SubSet)$ and leads to a contradiction.
    
    We conclude that 
    $0 \notin \closedconvexhull\bp{\RadialProjection\np{\conicalhull(\SubSet)}}$
    and, therefore, that $\SubSet$
    is \Capra-convex, from the reciprocal implication
    in~\eqref{eq:capra_convex_sets} in Corollary~\ref{co:rotund_balls}
    in the case of a rotund unit norm ball.
		
  \item Now let us assume \((ii)\).

    Consider the cone $\Cone= \conicalhull(\SubSet) \cup \na{0} = \positivehull(\SubSet) $ which is convex as $\SubSet$ is convex.
    Moreover, we have that $0\notin \SubSet$, as $\SubSet$ is convex and $0 \notin \convexhull(\SubSet)$
    which, combined with Lemma~\ref{le:closeness} and the fact that $\SubSet$ is compact, gives that
    $\Cone$ is closed. Thus $\Cone$ is a closed convex cone.
    If we prove now that $\Cone$ is a pointed cone, then we will obtain from Corollary~\ref{co:closed_convex_cones}
    that $\conicalhull(\SubSet) = \Cone \backslash \na{0}$ is a \Capra-convex set.

    To conclude the proof, it remains to show that $\Cone$ is a pointed cone. We prove the result by contradiction.
    Assume that there exists $\primal \in \conicalhull(\SubSet) \cap \np{-\conicalhull(\SubSet)}$. Then,
    there exists $\lambda >0$ and $\beta >0$ and $x_1$, $x_2$ in $\SubSet$ such that
    $x = \lambda x_1$ and $x = - \mu x_2$. We therefore obtain that
    $0 = ( \lambda x_1 + \mu x_2)/(\lambda + \mu)$ and therefore that $0 \in\convexhull(\SubSet)$ which leads to a contradiction.
    We conclude that $\Cone \cap (-\Cone) = \na{0}$ and, therefore, that $\Cone$ is a pointed cone.
  \end{itemize}
  This concludes the proof.
\end{proof}

\newcommand{\noopsort}[1]{} \ifx\undefined\allcaps\def\allcaps#1{#1}\fi

\end{document}